\renewcommand{\thesubfigure}{\thefigure.\arabic{subfigure}}
\renewcommand{\p@subfigure}{}
\renewcommand{\@thesubfigure}{\thesubfigure:\hskip\subfiglabelskip}
\newtheorem{theorem}{Theorem}
\newtheorem{lemma}{Lemma}
\newtheorem{corollary}{Corollary}
\newtheorem{proposition}{Proposition}
\newtheorem{remark}{Remark}
\theoremstyle{definition}
\newtheorem{definition}{Definition}
\newtheorem{example}{Example}
\newtheorem{conjecture}{Conjecture}
\newcommand{\norm}[1]{\left\|#1\right\|}
\newcommand{\str}{\mbox{str}} 
\newcommand{\strShape}{\mbox{strShape}} 
\newcommand{\sheet}{\mbox{sheet}} 
\newcommand{\cyl}{\mbox{cyl}} 
\newcommand{\wsh}{\mbox{wsh}} 
\newcommand{\sh}{\mbox{sheet}} 
\newcommand{\strBUT}{\mbox{strBUT}} 
\newcommand{\near}{\delta} 
\newcommand{\dnear}{\delta_{\Phi}} 
\newcommand{\dfar}{{\not\delta}_{\Phi}} 
\newcommand{\notfar}{\mathop{\not{\delta}}\limits^{\doublewedge}} 
\newcommand{\dcap}{\mathop{\cap}\limits_{\Phi}} 
\newcommand{\sn}{\mathop{\delta}\limits^{\doublewedge}} 
\newcommand{\snd}{\mathop{\delta_{_{\Phi}}}\limits^{\doublewedge}} 
\newcommand{\Int}{\mbox{int}}
\newcommand{\sdfar}{\stackrel{\not{\text{\normalsize$\delta$}}}{\text{\tiny$\doublevee$}}_{_{\Phi}}} 
\newcommand{\sfar}{\stackrel{\not{\text{\normalsize$\delta$}}}{\text{\tiny$\doublevee$}}} 
\begin{document}

\title{String-Based Borsuk-Ulam Theorem and Wired Friend Theorem}

\author[J.F. Peters]{J.F. Peters$^{\alpha}$}
\email{James.Peters3@umanitoba.ca}
\address{\llap{$^{\alpha}$\,}Computational Intelligence Laboratory,
University of Manitoba, WPG, MB, R3T 5V6, Canada and
Department of Mathematics, Faculty of Arts and Sciences, Ad\.{i}yaman University, 02040 Ad\.{i}yaman, Turkey}
\author[A. Tozzi]{A. Tozzi$^{\beta}$}
\address{\llap{$^{\beta}$\,} Department of Physics, Center for Nonlinear Science, University of North Texas, Denton, Texas 76203 and Computational Intelligence Laboratory, University of Manitoba, WPG, MB, R3T 5V6, Canada}
\thanks{The research has been supported by the Scientific and Technological Research
Council of Turkey (T\"{U}B\.{I}TAK) Scientific Human Resources Development
(BIDEB) under grant no: 2221-1059B211402463, Natural Sciences \&
Engineering Research Council of Canada (NSERC) discovery grant 185986 and Instituto Nazionale di Alta Matematica (INdAM) Francesco Severi, Gruppo Nazionale per le Strutture Algebriche, Geometriche e Loro Applicazioni grant 9 920160 000362, n.prot U 2016/000036.}

\subjclass[2010]{Primary 54E05 (Proximity structures); Secondary 37J05 (General Topology), 55P50 (String Topology)}

\date{}

\dedicatory{Dedicated to Som Naimpally, Mathematician and D.I. Olive, F.R.S.}

\begin{abstract}
This paper introduces a string-based extension of the Borsuk-Ulam Theorem (denoted by strBUT).  A string is a region with zero width and either bounded or unbounded length on the surface of an $n$-sphere or a region of a normed linear space.  In this work, an $n$-sphere surface is covered by a collection of strings. For a strongly proximal continuous function on an $n$-sphere into $n$-dimensional Euclidean space, there exists a pair of antipodal $n$-sphere strings with matching descriptions that map into Euclidean space $\mathbb{R}^n$.  Each region $M$ of a string-covered $n$-sphere is a worldsheet.  For a strongly proximal continuous mapping from a worldsheet-covered $n$-sphere to $\mathbb{R}^n$, strongly near antipodal worldsheets map into the same region in $\mathbb{R}^n$.  This leads to a wired friend theorem in descriptive string theory.  An application of strBUT is given in terms of the evaluation of Electroencephalography (EEG) patterns.
\end{abstract}

\keywords{Borsuk-Ulam Theorem, $n$-sphere, Region, Strong Proximity, String}

\maketitle

\section{Introduction}
In this paper, we consider a geometric structure that has the characteristics of a cosmological string $A$ (denoted $\str A$), which is the path followed by a particle $A$ moving through space. 

\setlength{\intextsep}{0pt}
\begin{wrapfigure}[11]{R}{0.33\textwidth}
\begin{minipage}{3.3 cm}
\centering
\includegraphics[width=35mm]{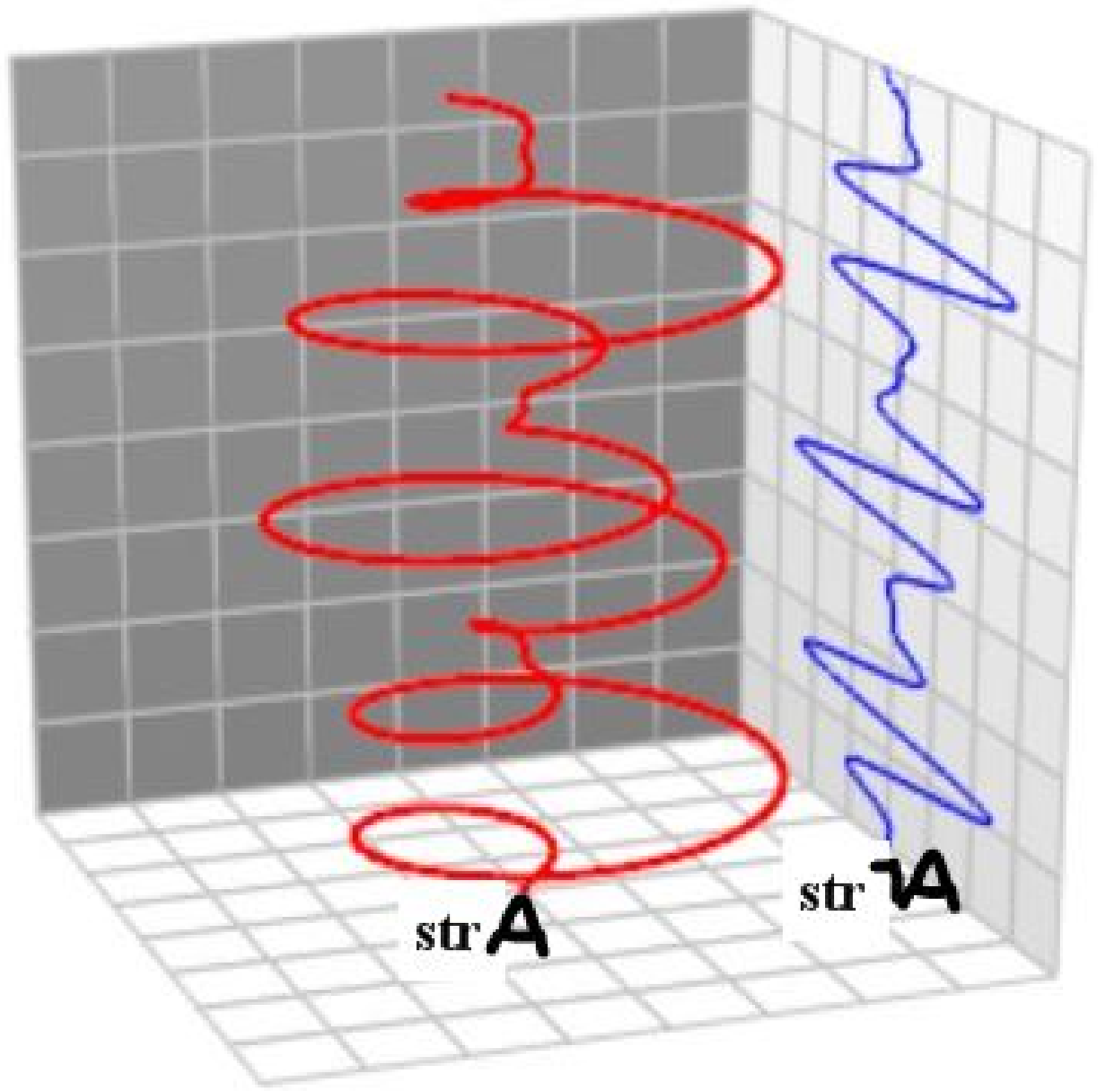}
\caption[]{\footnotesize \bf Strings}
\label{fig:worldlinesABE}
\end{minipage}
\end{wrapfigure}
$\mbox{}$\\

 A \emph{string} is a region of space with zero width (in an abstract geometric space or geometry in a science of spaces~\cite[\S 8.0]{ScribaSchreiber2005geometry}, esp.~\cite{Brouwer1954CJMpointsAndSpaces}) or non-zero width (in a non-abstract, physical geometry space such as the space described by V.F. Lenzen~\cite{Lenzen1939AMMphysicalGeometry}) and either bounded or unbounded length.  Another name for such a string is \emph{worldline}~\cite{OliveLandsberg1989stringTheory,Olive1988stringsAndSuperstrings,Olive1987algebrasAndStrings}.  Here, a string is a region on the surface of an $n$-sphere or in an $n$-dimensional normed linear space.  Every string is a spatial region, but not every spatial region is a string.  Strings that have some or no points in common are \emph{antipodal}.  

Examples of strings with finite length are shown in Fig.~\ref{fig:worldlinesABE}.  Regions $\str A,\righthalfcap \str A$ are examples of antipodal strings.
Various continuous mappings from an $n$-dimensional hypersphere $S^n$ to a feature space that is an $n$-dimensional Euclidean space $\mathbb{R}^n$ lead to a string-based incarnation of the Borsuk-Ulam Theorem, which is a remarkable finding about continuous mappings from antipodal points on an $n$-sphere into $\mathbb{R}^n$, found by K. Borsuk~\cite{Borsuk1933FMantipodes}.


The Borsuk-Ulam Theorem is given in the following form by M.C. Crabb and J. Jaworowski~\cite{Crabb2013BUT}.

\begin{theorem}\label{thm:Borsuk-Ulam1933one}{\bf Borsuk-Ulam Theorem}~{\rm\cite[Satz II]{Borsuk1933FMantipodes}}.\\
Let $f:S^n\longrightarrow \mathbb{R}^n$ be a continuous map.  There exists a point $x\in S^n\subseteq\mathbb{R}^{n+1}$ such that $f(x) = f(-x)$.  
\end{theorem}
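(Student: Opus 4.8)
The plan is to reduce the statement to the nonexistence of a continuous antipode-preserving map $S^n \to S^{n-1}$, and then to dispatch that core fact by a standard topological argument. First I would argue by contradiction: suppose $f(x) \neq f(-x)$ for every $x \in S^n$. Consider the map $g: S^n \to \mathbb{R}^n$ defined by $g(x) = f(x) - f(-x)$. By construction $g$ is continuous and \emph{odd}, meaning $g(-x) = -g(x)$, and the assumption says $g$ is nowhere zero. Normalizing, $h(x) = g(x)/\norm{g(x)}$ defines a continuous odd map $h : S^n \to S^{n-1}$. It therefore suffices to prove that no continuous odd map from $S^n$ to $S^{n-1}$ exists.

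For the core fact, I would pass to the quotient by the antipodal action. An odd map $h: S^n \to S^{n-1}$ commutes with the antipodal involution, so it descends to a continuous map $\bar h : \mathbb{RP}^n \to \mathbb{RP}^{n-1}$ that is compatible with the two double covers $S^k \to \mathbb{RP}^k$. Lifting a noncontractible loop in $\mathbb{RP}^n$ to a path joining antipodal points of $S^n$ and pushing it through $h$ shows that, for $n \geq 2$, $\bar h$ sends the generator of $\pi_1(\mathbb{RP}^n) \cong \mathbb{Z}/2$ to the generator of $\pi_1(\mathbb{RP}^{n-1}) \cong \mathbb{Z}/2$. Consequently, on mod-$2$ cohomology $\bar h^*$ sends the degree-one generator $\alpha \in H^1(\mathbb{RP}^{n-1};\mathbb{Z}/2)$ to the degree-one generator $\beta \in H^1(\mathbb{RP}^n;\mathbb{Z}/2)$.

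Now I would exploit the cup-product structure $H^*(\mathbb{RP}^k;\mathbb{Z}/2) \cong (\mathbb{Z}/2)[\beta]/(\beta^{k+1})$. Because $\bar h^*$ is a ring homomorphism and $\alpha^n = 0$ already in $H^n(\mathbb{RP}^{n-1};\mathbb{Z}/2) = 0$, we obtain $\beta^n = \bar h^*(\alpha)^n = \bar h^*(\alpha^n) = 0$ in $H^n(\mathbb{RP}^n;\mathbb{Z}/2)$. But $\beta^n$ is precisely the nonzero generator of $H^n(\mathbb{RP}^n;\mathbb{Z}/2) \cong \mathbb{Z}/2$, a contradiction. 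This rules out $h$, so $g$ must vanish somewhere, yielding the desired $x$ with $f(x) = f(-x)$. The base case $n = 1$ I would handle directly by an intermediate-value argument on $g$ restricted to a semicircle.

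The main obstacle is the core step, namely establishing that no odd map $S^n \to S^{n-1}$ exists. The delicate points are verifying that the antipodal structure forces $\bar h^*$ to be nontrivial in degree one rather than zero, and having the cohomology ring of projective space with its cup products available. An alternative route avoiding $\pi_1$ altogether is to invoke the fact that every odd self-map of $S^n$ has odd degree, and then to compose $h$ with the equatorial inclusion $S^{n-1} \hookrightarrow S^n$: the result is an odd self-map of $S^n$ whose image misses the two poles, hence is not surjective and has degree zero, contradicting the odd-degree property.
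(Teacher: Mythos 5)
Your proof is correct in substance, and it takes a genuinely different route from the paper's. The paper does not give a self-contained argument at all: its ``proof'' simply defers to the 1930 covering theorem of Lusternik and Schnirelmann (if $S^n$ is covered by $n+1$ closed sets, one of them contains a pair of antipodal points) and points to Remark 3.4 of Crabb--Jaworowski for how BUT follows. Your argument is instead the classical algebro-topological one: reduce to the nonexistence of a continuous odd map $h:S^n\to S^{n-1}$ via $g(x)=f(x)-f(-x)$, descend to projective spaces, and contradict the ring structure of $H^*(\mathbb{RP}^n;\mathbb{Z}/2)$. What the paper's cited route buys is a short covering-type statement requiring no cohomology machinery; what yours buys is a complete, self-contained proof given standard tools (covering spaces, cup products). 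One small repair is needed in your core step: for $n=2$ the claim that $\bar h_*$ ``sends the generator of $\pi_1(\mathbb{RP}^n)\cong\mathbb{Z}/2$ to the generator of $\pi_1(\mathbb{RP}^{n-1})\cong\mathbb{Z}/2$'' fails as stated, because $\pi_1(\mathbb{RP}^1)\cong\mathbb{Z}$. The fix is immediate: your lifted-path argument shows the image of the generator is represented by a loop whose lift joins antipodal points, hence is a nontrivial (odd) class, yet it must also be $2$-torsion as the image of an order-$2$ element; in $\mathbb{Z}$ this is already a contradiction, so the case $n=2$ closes before any cup products are needed, and for $n\geq 3$ both fundamental groups are $\mathbb{Z}/2$ and your argument proceeds exactly as written. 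Your alternative sketch via Borsuk's odd-degree theorem is also valid, though that theorem is essentially of the same depth as the statement being proved, so it is less a shortcut than a restatement of where the difficulty lives.
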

\begin{proof}
A proof of this form of the Borsuk-Ulam Theorem follows from a result given in 1930 by L.A. Lusternik and L. Schnirelmann~\cite{Lusternik1930BUT}.  For the backbone of the proof, see Remark 3.4 in~\cite{Crabb2013BUT}.
\end{proof}

\section{Preliminaries}
This section briefly introduces Petty antipodal sets that are strings and the usual continuous function required by BUT is replaced by a proximally continuous function.  In addition, we briefly consider strong descriptive proximities as well as two point-based variations of the Borsuk-Ulam Theorem (BUT) for topological spaces equipped with a descriptive proximity or a strong descriptive proximity.
$\mbox{}$\\
\vspace{3mm}

\begin{figure}[!ht]
\centering
\includegraphics[width=55mm]{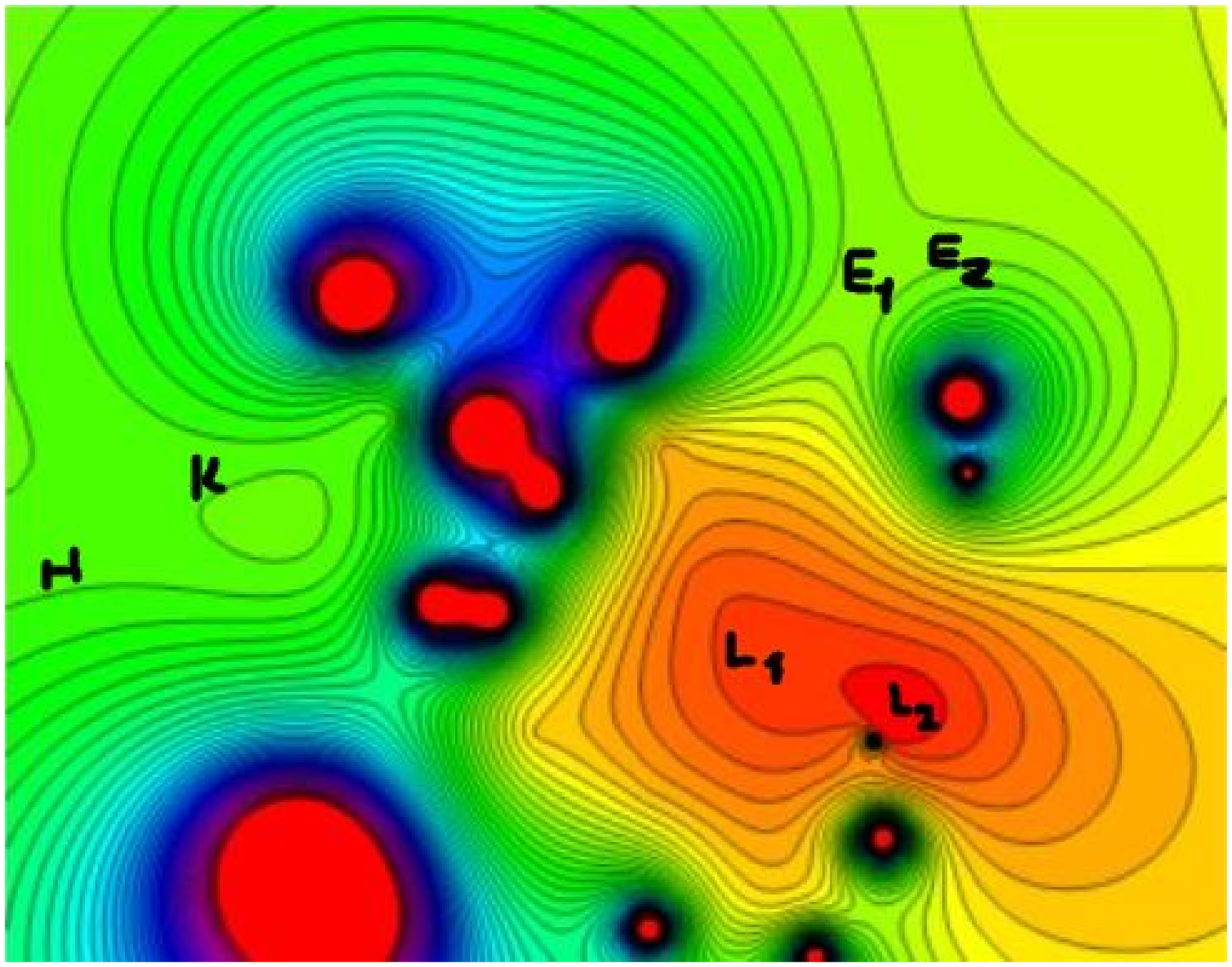}
\caption[]{Antipodal sets $H,K,E_1,E_2,L_1,L_2$ that are strings}
\label{fig:antipodesHKL}
\end{figure}

\subsection{Antipodal sets}
In considering a string-based BUT, we consider antipodal sets instead of antipodal points.  A subset $A$ of an $n$-dimensional real linear space $\mathbb{R}^n$ is a Petty \emph{antipodal set}, provided, for each pair of points $p,q\in A$, there exist disjoint parallel hyperplanes $P,Q$ such that $p\in P, q\in Q$~\cite[\S 2]{Petty1971antipodalSets}.  A \emph{hyperplane} is subspace $H = \left\{x\in \mathbb{R}^n: x\cdot v = c, v\in S^{n-1},c\in\mathbb{R}\right\}$ of a vector space~\cite{CoxMcKelvey1984hyperplane}.  In general, a hyperplane is any codimension-1 subspace of a vector space~\cite{Weisstein2016hyperplane}. 

\begin{figure}[!ht]
\centering
\subfigure[bounded worldsheet]
 {\label{fig:1worldsheet}\includegraphics[width=85mm]{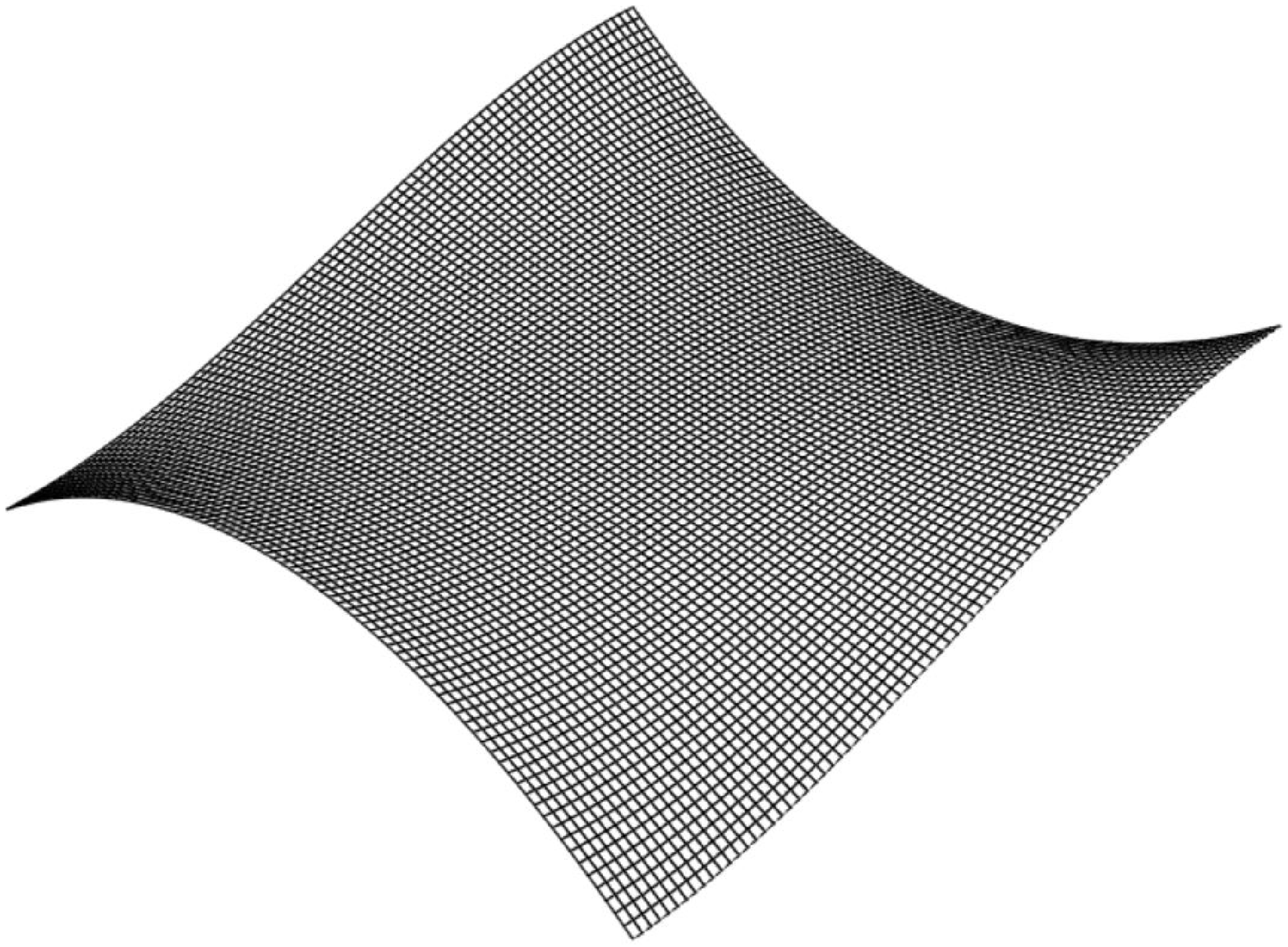}}\hfil
\subfigure[rolled worldsheet]
 {\label{fig:2worldsheet}\includegraphics[width=30mm]{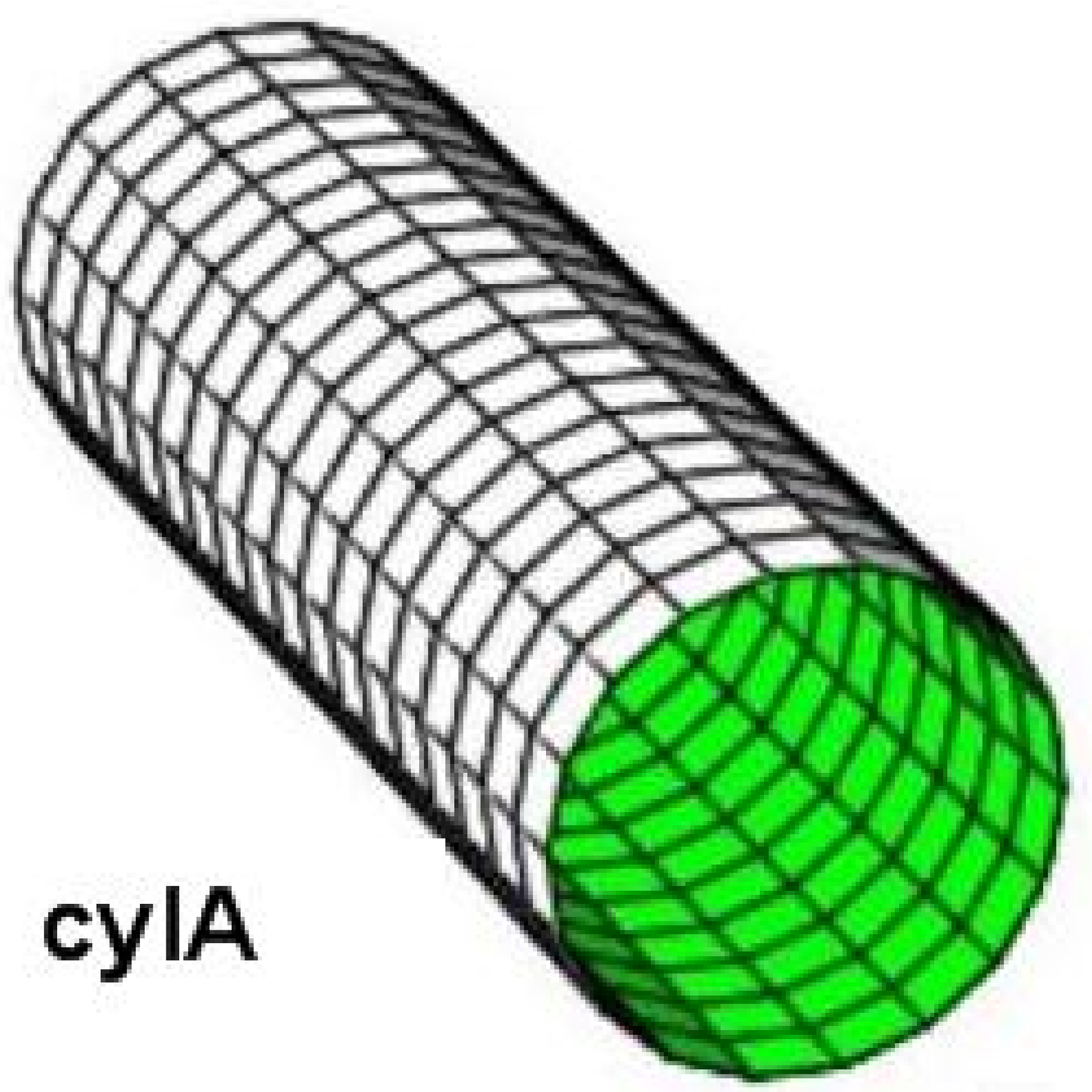}}
\caption[]{Bounded worldsheet $\mapsto$ cylinder surface}
\label{fig:corticalRegions}
\end{figure}
$\mbox{}$\\
\vspace{2mm}


Let $X$ be an $n$-dimensional Euclidean space $\mathbb{R}^n$ such that every member of $X$ is a string and let $S,\righthalfcap S\subset X$.  $A\in S,\righthalfcap A\in \righthalfcap S$ are \emph{antipodal sets}, provided $A,\righthalfcap A$ are subsets of disjoint parallel hyperplanes.  For example, strings are antipodal, provided the strings are subsets of disjoint parallel hyperplanes.  In this work, we relax the Petty antipodal set parallel hyperplane requirement.  That is, a pair of spatial regions (\emph{aka}, strings) $\str A,\righthalfcap \str A$ are \emph{antipodal}, provided the regions or strings contain proper substrings of disjoint parallel hyperplanes, {\em i.e.}, antipodes $A,\righthalfcap A$ contain proper substrings $\str A'\subset A,\righthalfcap \str A'\subset\righthalfcap A$ such that $\str A'\cap \righthalfcap \str A' = \emptyset$.  

\setlength{\intextsep}{0pt}
\begin{wrapfigure}[11]{R}{0.35\textwidth}
\begin{minipage}{3.2 cm}
\centering
\includegraphics[width=35mm]{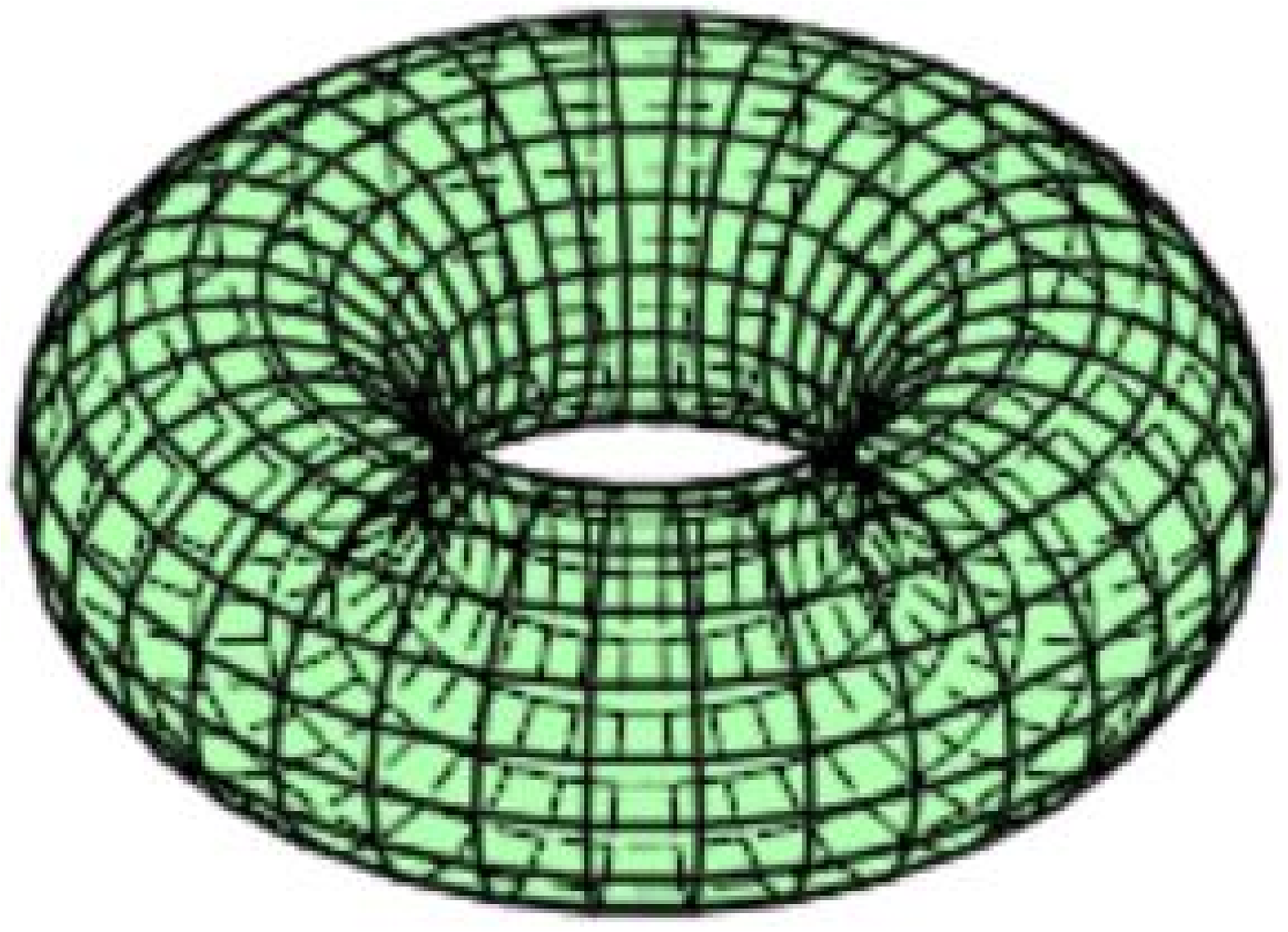}
\caption[]{\footnotesize \bf Torus}
\label{fig:torus}
\end{minipage}
\end{wrapfigure}

In other words, a pair of strings $\str A,\righthalfcap \str A$ are \emph{antipodal}, provided there are points $p\in \str A, q\in \righthalfcap \str A$ that belong to disjoint hyperplanes. 

$\mbox{}$\\
\vspace{3mm}
\begin{example}
Let $\str H,\str K,\str E_1,\str E_2,\str L_1,\str L_2$ in Fig.~\ref{fig:antipodesHKL} represent strings.  Then
\begin{align*}
\str H,\str K &\mbox{are antipodal, since}\ \str H\cap \str K = \emptyset,\\
\str H,\str E_1\cup \str E_2 &\mbox{are antipodal, since}\ \str H\cap \left(\str E_1\cup \str E_2\right) = \emptyset,\\
\str H,\str L_1\cup L_2 &\mbox{are antipodal, since}\ \str H\cap \left(\str L_1\cup \str L_2\right) = \emptyset,\\
\str E_1,\str L_1 &\mbox{are antipodal, since}\ \str E_1\cap \str L_1 = \emptyset,\\
\str E_1,\str E_2 &\mbox{are antipodal, since}\ X\cap Y \neq \emptyset,\ \mbox{for}\ X\in 2^{\str E_1},Y\in 2^{\str E_2},\\
\str L_1,\str L_2 &\mbox{are antipodal, since}\ X\cap Y \neq \emptyset,\ \mbox{for}\ X\in 2^{\str L_1},Y\in 2^{\str L_2}.
\end{align*}
By definition, $\str E_1,\str E_2$ and $\str L_1,\str L_2$ are antipodal, since
\begin{align*}
\left(\str E_1\cup \str E_2\setminus \str E_1\cap \str E_2\right) &\neq \emptyset,\ \mbox{and}\\
\left(\str L_1\cup \str L_2\setminus \str L_1\cap \str L_2\right) &\neq \emptyset.\mbox{\qquad \textcolor{blue}{$\blacksquare$}}
\end{align*}
\end{example} 

In a point-free geometry~\cite{DiConcilio2013mcs,Concilio2006PointFreeGeometry}, regions (nonempty sets) replace points as the primitives.  Let $M$ be a nonempty region of a space $X$.  Region $M$ is a \emph{worldsheet} (denoted by $\sh M$), provided every subregion of $\sh M$ contains at least one string.  In other words, a worldsheet is completely covered by strings.  Let $X$ be a collection of strings.  $X$ is a \emph{cover} for $\sh M$, provided $\sh M\subseteq X$.  Every member of a worldsheet is a string.

\begin{example}\label{ex:rolledUpWorldsheet}
\includegraphics[width=30mm]{1worldsheet}{\large $\boldsymbol{\longmapsto}$}\includegraphics[width=10mm]{2worldsheet} A worldsheet $\sh M$ with finite width and finite length is represented in Fig.~\ref{fig:1worldsheet}.  This worldsheet is rolled up to form the lateral surface of a cylinder represented in Fig.~\ref{fig:2worldsheet}, namely, $2\pi rh$ with radius $r$ and height $h$ equal to the length of $\sh M$.  We call this a worldsheet cylinder.  In effect, a flattened, bounded worldsheet maps to a worldsheet cylinder. \qquad \textcolor{blue}{$\blacksquare$}
\end{example}

The rolled up world sheet in Example~\ref{ex:rolledUpWorldsheet} is called a worldsheet cylinder surface.
Let $\sh M,\righthalfcap \sh M$ be a pair of worldsheets.  $\sh M,\righthalfcap \sh M$ are antipodal, provided there is at least one string $\str A\in \sh M, \str B\in \righthalfcap \sh M$ such that $\str A\cap \str B = \emptyset$.  A worldsheet cylinder maps to a worldsheet torus.

\begin{example}\label{ex:bendedUpWorldsheetTorus}
\includegraphics[width=10mm]{2worldsheet}\quad {\large $\boldsymbol{\longmapsto}$}\includegraphics[width=15mm]{torus} A worldsheet torus $\sh M$ with finite radius and finite length is represented in Fig.~\ref{fig:torus}.  This worldsheet torus is formed by bending a worldsheet cylinder until the ends meet.  In effect, a flattened, a worldsheet cylinder maps to worldsheet torus. \qquad \textcolor{blue}{$\blacksquare$}
\end{example}

\begin{conjecture}
A bounded worldsheet cylinder is homotopically equivalent to a worldsheet torus.
\qquad \textcolor{blue}{$\blacksquare$}
\end{conjecture}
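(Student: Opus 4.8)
The plan is to realize both objects concretely as spaces built from a single flat worldsheet and then to compare them with the tools of homotopy theory. Following Examples~\ref{ex:rolledUpWorldsheet} and~\ref{ex:bendedUpWorldsheetTorus}, I would model the bounded worldsheet cylinder as $C = S^1\times [0,h]$ (the rectangle with its two vertical edges identified) and the worldsheet torus as $T = S^1\times S^1$ (obtained by further identifying the boundary circles $S^1\times\{0\}$ and $S^1\times\{h\}$). The ``bending until the ends meet'' operation then becomes an explicit continuous surjection $b\colon C\to T$, say $b(\theta,t)=(\theta,\, e^{2\pi i t/h})$, and the natural first step is to search for a continuous $g\colon T\to C$ together with homotopies $g\circ b\simeq \mathrm{id}_C$ and $b\circ g\simeq \mathrm{id}_T$.

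Next I would try to validate any candidate equivalence through algebraic invariants, since a homotopy equivalence must induce isomorphisms on all of them. Here one computes that $C$ deformation retracts onto its core circle $S^1\times\{h/2\}$, so $\pi_1(C)\cong \mathbb{Z}$ and $H_2(C)=0$, whereas the torus satisfies $\pi_1(T)\cong \mathbb{Z}\oplus\mathbb{Z}$ and $H_2(T)\cong\mathbb{Z}$. These computations are routine and are the measuring stick against which the bending map $b$ must be tested.

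The hard part---indeed the decisive obstacle---is precisely this comparison: the map $b$ collapses the interval factor onto a second circle and therefore cannot be inverted up to homotopy, and more fundamentally $\mathbb{Z}\not\cong \mathbb{Z}\oplus\mathbb{Z}$ shows that no homotopy equivalence between $C$ and $T$ can exist in the ordinary topological category. Consequently a literal proof is unavailable, and the statement can only be rescued by weakening ``homotopically equivalent'' to a descriptive notion adapted to the worldsheet/string setting of this paper. Concretely, I would replace ordinary homotopy by a description-preserving homotopy, in which a deformation is required only to carry strings to strings with matching descriptions, and then attempt to show that the rolling map and the bending map $b$ are mutually inverse up to such descriptive homotopy. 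The genuine difficulty then migrates to foundations: one must first establish that this descriptive homotopy is an equivalence relation coarse enough to forget $H_2$ yet still meaningful on strings, and only afterwards can the cylinder-to-torus equivalence be asserted within that framework rather than in the classical one.
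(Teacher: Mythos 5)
The paper offers no proof for you to be compared against: this statement is left as a \emph{conjecture}, stated without any argument, so your analysis stands or falls on its own. On the decisive mathematical point you are correct. Under the only concrete reading the paper supplies (Examples~\ref{ex:rolledUpWorldsheet} and~\ref{ex:bendedUpWorldsheetTorus}), the bounded worldsheet cylinder is $C = S^1\times[0,h]$ and the worldsheet torus is $T = S^1\times S^1$; these are not homotopy equivalent, since $C$ deformation retracts onto its core circle, giving $\pi_1(C)\cong\mathbb{Z}$ and $H_2(C)=0$, while $\pi_1(T)\cong\mathbb{Z}\oplus\mathbb{Z}$ and $H_2(T)\cong\mathbb{Z}$. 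Because a homotopy equivalence induces isomorphisms on these invariants, no choice of maps can repair this: the bending map $b$ is a quotient that creates a new essential loop and a new $2$-cycle, which is precisely why it admits no homotopy inverse. So rather than proving the conjecture, you have refuted it in the classical topological category; that is a legitimate and, in fact, the correct resolution of the literal statement.

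Where your proposal stops short is the rescue. Replacing homotopy equivalence by a ``descriptive homotopy'' is a plausible direction, consonant with the paper's descriptive proximities $\dnear$ and $\snd$, but as written it is a research program, not an argument: you would need to define precisely when two maps (or two worldsheets) are descriptively homotopic, verify that the relation is reflexive, symmetric, and transitive, check its compatibility with the paper's Re.s.p.c.\ mappings, and confirm that it is genuinely coarse enough to identify $C$ with $T$ --- for instance by showing that a feature vector $\Phi$ that ignores the identification of the two boundary circles yields matching descriptions of the two spaces. None of these steps is carried out, and the last one is where the real content would lie. The honest summary of your work is therefore: the conjecture as stated is false, and any proof must begin by replacing the classical notion of ``homotopically equivalent'' with a weaker, explicitly defined descriptive equivalence; your proposal identifies this necessity correctly but does not yet supply the replacement.
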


\subsection{Strong Descriptive proximity}
The descriptive proximity $\delta_{\Phi}$ was introduced in~\cite{Peters2012ams}.   Let $A,B \subset X$ and let $\Phi(x)$ be a feature vector for $x\in X$, a nonempty set of non-abstract points such as picture points.  $A\ \delta_{\Phi}\ B$ reads $A$ is descriptively near $B$, provided $\Phi(x) = \Phi(y)$ for at least one pair of points, $x\in A, y\in B$.  From this, we obtain the description of a set and the descriptive intersection~\cite[\S 4.3, p. 84]{Naimpally2013} of $A$ and $B$ (denoted by $A\ \dcap\ B$) defined by
\begin{description}
\item[{\rm\bf ($\boldsymbol{\Phi}$)}] $\Phi(A) = \left\{\Phi(x)\in\mathbb{R}^n: x\in A\right\}$, set of feature vectors.
\item[{\rm\bf ($\boldsymbol{\dcap}$)}]  $A\ \dcap\ B = \left\{x\in A\cup B: \Phi(x)\in \Phi(A)\ \mbox{and}\ \Phi(x)\in \Phi(B)\right\}$.
\qquad \textcolor{blue}{$\blacksquare$}
\end{description}
Then swapping out $\near$ with $\dnear$ in each of the Lodato axioms defines a descriptive Lodato proximity. 

That is, a \textit{descriptive Lodato proximity $\dnear$} is a relation on the family of sets $2^X$, which satisfies the following axioms for all subsets $A, B, C $ of $X$.\\

\begin{description}
\item[{\rm\bf (dP0)}] $\emptyset\ \dfar\ A, \forall A \subset X $.
\item[{\rm\bf (dP1)}] $A\ \dnear\ B \Leftrightarrow B\ \dnear\ A$.
\item[{\rm\bf (dP2)}] $A\ \dcap\ B \neq \emptyset \Rightarrow\ A\ \dnear\ B$.
\item[{\rm\bf (dP3)}] $A\ \dnear\ (B \cup C) \Leftrightarrow A\ \dnear\ B $ or $A\ \dnear\ C$.
\item[{\rm\bf (dP4)}] $A\ \dnear\ B$ and $\{b\}\ \dnear\ C$ for each $b \in B \ \Rightarrow A\ \dnear\ C$. \qquad \textcolor{blue}{$\blacksquare$}
\end{description}
\vspace{3mm}
$\emptyset \dfar A$ reads the empty set is descriptively far from $A$.  Further $\dnear$ is \textit{descriptively separated }, if 
\vspace{3mm}
\begin{description}
\item[{\rm\bf (dP5)}] $\{x\}\ \dnear\ \{y\} \Rightarrow \Phi(x) = \Phi(y)$ ($x$ and $y$ have matching descriptions). \qquad \textcolor{blue}{$\blacksquare$}
\end{description}
\vspace{3mm}


\begin{proposition}\label{prop:dnear}~\cite{Peters2016arXivNerves}
Let $\left(X,\dnear\right)$ be a descriptive proximity space, $A,B\subset X$.  Then $A\ \dnear\ B \Rightarrow A\ \dcap\ B\neq \emptyset$.
\end{proposition}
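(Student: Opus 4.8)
The plan is to prove this directly from the concrete meaning of the descriptive nearness relation, by exhibiting an explicit witness in the descriptive intersection. Since $\left(X,\dnear\right)$ is a descriptive proximity space, the relation $\dnear$ carries the feature-vector interpretation fixed at the outset: $A\ \dnear\ B$ holds precisely when $\Phi(x) = \Phi(y)$ for at least one pair of points $x\in A$, $y\in B$. My strategy is to take such a matching pair and show that the point drawn from $A$ already belongs to $A\ \dcap\ B$.

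First I would invoke the hypothesis $A\ \dnear\ B$ to obtain points $x_0\in A$ and $y_0\in B$ with $\Phi(x_0) = \Phi(y_0)$. Next I would verify the two membership clauses in the definition of $A\ \dcap\ B$ for the candidate $x_0$. Since $x_0\in A\subseteq A\cup B$, the point $x_0$ is eligible, and $\Phi(x_0)\in\Phi(A)$ is immediate from $x_0\in A$. For the second clause, the matching equality gives $\Phi(x_0) = \Phi(y_0)$ with $y_0\in B$, so $\Phi(x_0)\in\Phi(B)$. Both conditions defining the descriptive intersection are therefore satisfied by $x_0$, whence $x_0\in A\ \dcap\ B$ and in particular $A\ \dcap\ B\neq\emptyset$.

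The argument requires no serious computation; the only point that deserves care is recognizing that this statement is exactly the converse of axiom {\rm\bf (dP2)} and hence cannot be extracted from the descriptive Lodato axioms alone. The proof must appeal to the concrete definition of $\dnear$ in terms of equal feature vectors, since this is what supplies the matching pair $\left(x_0,y_0\right)$ in the first place. Read together with {\rm\bf (dP2)}, the proposition upgrades the one-way implication to the equivalence $A\ \dnear\ B \Leftrightarrow A\ \dcap\ B\neq\emptyset$, so the substantive content is that descriptive nearness and nonempty descriptive intersection coincide in this framework.
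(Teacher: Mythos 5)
Your proof is correct, and it is essentially the only proof available: the paper itself gives no argument for this proposition (it is quoted from the cited reference), and the standard proof there is exactly your witness argument --- use the concrete feature-vector definition of $\dnear$ to extract a matching pair $x_0\in A$, $y_0\in B$ with $\Phi(x_0)=\Phi(y_0)$, then check that $x_0$ satisfies both clauses in the definition of $A\ \dcap\ B$. Your closing remark is also on point: the statement is the converse of axiom {\rm\bf (dP2)} and cannot follow from the descriptive Lodato axioms alone (an abstract proximity declaring all nonempty sets near one another satisfies them while violating the conclusion), so appealing to the concrete definition of $\dnear$ is not optional but necessary.
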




Nonempty sets $A,B$ in a topological space $X$ equipped with the relation $\sn$, are \emph{strongly near} [\emph{strongly contacted}] (denoted $A\ \sn\ B$), provided the sets have at least one point in common.   The strong contact relation $\sn$ was introduced in~\cite{Peters2015JangjeonMSstrongProximity} and axiomatized in~\cite{PetersGuadagni2015stronglyNear},~\cite[\S 6 Appendix]{Guadagni2015thesis}.

Let $X$ be a topological space, $A, B, C \subset X$ and $x \in X$.  The relation $\sn$ on the family of subsets $2^X$ is a \emph{strong proximity}, provided it satisfies the following axioms.

\begin{description}
\item[{\rm\bf (snN0)}] $\emptyset\ \sfar\ A, \forall A \subset X $, and \ $X\ \sn\ A, \forall A \subset X$.
\item[{\rm\bf (snN1)}] $A \sn B \Leftrightarrow B\ \sn\ A$.
\item[{\rm\bf (snN2)}] $A\ \sn\ B$ implies $A\ \cap\ B\neq \emptyset$. 
\item[{\rm\bf (snN3)}] If $\{B_i\}_{i \in I}$ is an arbitrary family of subsets of $X$ and  $A\ \sn\ B_{i^*}$ for some $i^* \in I \ $ such that $\Int(B_{i^*})\neq \emptyset$, then $A\ \sn\ (\bigcup_{i \in I} B_i)$ 
\item[{\rm\bf (snN4)}]  $\mbox{int}A\ \cap\ \mbox{int} B \neq \emptyset \Rightarrow A\ \sn\ B$.  
\qquad \textcolor{blue}{$\blacksquare$}
\end{description}

\noindent When we write $A\ \sn\ B$, we read $A$ is \emph{strongly near} $B$ ($A$ \emph{strongly contacts} $B$).  The notation $A\ \notfar\ B$ reads $A$ is not strongly near $B$ ($A$ does not \emph{strongly contact} $B$). For each \emph{strong proximity} (\emph{strong contact}), we assume the following relations:
\begin{description}
\item[{\rm\bf (snN5)}] $x \in \Int (A) \Rightarrow x\ \sn\ A$ 
\item[{\rm\bf (snN6)}] $\{x\}\ \sn\ \{y\}\ \Leftrightarrow x=y$  \qquad \textcolor{blue}{$\blacksquare$} 
\end{description}

For strong proximity of the nonempty intersection of interiors, we have that $A \sn B \Leftrightarrow \Int A \cap \Int B \neq \emptyset$ or either $A$ or $B$ is equal to $X$, provided $A$ and $B$ are not singletons; if $A = \{x\}$, then $x \in \Int(B)$, and if $B$ too is a singleton, then $x=y$. It turns out that if $A \subset X$ is an open set, then each point that belongs to $A$ is strongly near $A$.  The bottom line is that strongly near sets always share points, which is another way of saying that sets with strong contact have nonempty intersection.\\

The descriptive strong proximity $\snd$ is the descriptive counterpart of $\sn$.
To obtain a \emph{descriptive strong Lodato proximity} (denoted by \emph{\bf dsn}), we swap out $\dnear$ in each of the descriptive Lodato axioms with the descriptive strong proximity $\snd$.  

Let $X$ be a topological space, $A, B, C \subset X$ and $x \in X$.  The relation $\snd$ on the family of subsets $2^X$ is a \emph{descriptive strong Lodato proximity}, provided it satisfies the following axioms.
\vspace{2mm}
\begin{description}
\item[{\rm\bf (dsnP0)}] $\emptyset\ {\sdfar}\ A, \forall A \subset X $, and \ $X\ \snd\ A, \forall A \subset X$.
\item[{\rm\bf (dsnP1)}] $A\ \snd\ B \Leftrightarrow B\ \snd\ A$.
\item[{\rm\bf (dsnP2)}] $A\ \snd\ B$ implies $A\ \dcap\ B\neq \emptyset$.  
\item[{\rm\bf (dsnP4)}] $\mbox{int}A\ \dcap\ \mbox{int} B \neq \emptyset \Rightarrow A\ \snd\ B$.  
\qquad \textcolor{blue}{$\blacksquare$}
\end{description}

\noindent When we write $A\ \snd\ B$, we read $A$ is \emph{descriptively strongly near} $B$.
For each \emph{descriptive strong proximity}, we assume the following relations:
\begin{description}
\item[{\rm\bf (dsnP5)}] $\Phi(x) \in \Phi(\Int (A)) \Rightarrow x\ \snd\ A$. 
\item[{\rm\bf (dsnP6)}] $\{x\}\ \snd\ \{y\} \Leftrightarrow \Phi(x) = \Phi(y)$.  
\qquad \textcolor{blue}{$\blacksquare$} 
\end{description}

\begin{definition}
Suppose that $(X, \tau_X, {\sn}_X) $ and $(Y, \tau_Y, {\sn}_Y)$ are topological spaces endowed with strong proximities~\cite{PetersGuadagni2015strongConnectedness}.  We say that the map $f: X \rightarrow Y$ is \emph{strongly proximal continuous} and we write \emph{\bf s.p.c.} if and only if, for $A, B \subset X$, 
\[
\ A\ {\sn}_X\ B \Rightarrow f(A)\ {\sn}_Y\ f(B). \mbox{\qquad \textcolor{black}{$\blacksquare$}}
\] 
\end{definition}

\begin{theorem}\label{open}~{\rm \cite{PetersGuadagni2015strongConnectedness}}
Suppose that $(X, \tau_X, {\sn}_X) $ and $(Y, \tau_Y, {\sn}_Y)$ are topological spaces endowed with compatible strong proximities and $f: X \rightarrow Y$ is s.p.c. Then $f$ is an open mapping, {\em i.e.}, $f$ maps open sets in open sets.
\end{theorem}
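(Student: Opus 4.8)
The plan is to establish openness pointwise: given an open set $U\subseteq X$, I would show that every point of $f(U)$ lies in $\Int_Y f(U)$, so that $f(U)=\Int_Y f(U)$ and hence $f(U)$ is open in $Y$. The engine of the argument is strong proximal continuity, which lets me push a strong nearness relation in $X$ forward to one in $Y$, combined with the singleton characterization of strong nearness recorded just after the strong proximity axioms.

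Concretely, I would fix an open set $U\subseteq X$ and an arbitrary point $y\in f(U)$, and write $y=f(x_0)$ with $x_0\in U$. Since $U$ is open we have $x_0\in\Int_X U=U$, so axiom (snN5) yields $\{x_0\}\ {\sn}_X\ U$. Applying the s.p.c.\ property of $f$ to this relation gives $f(\{x_0\})\ {\sn}_Y\ f(U)$, and because $f(\{x_0\})=\{y\}$ this reads $\{y\}\ {\sn}_Y\ f(U)$. Now I invoke the singleton-to-set characterization, namely that for a singleton $\{y\}$ one has $\{y\}\ {\sn}_Y\ B\Leftrightarrow y\in\Int_Y B$; applied with $B=f(U)$ it forces $y\in\Int_Y f(U)$. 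Since $y\in f(U)$ was arbitrary, $f(U)\subseteq\Int_Y f(U)\subseteq f(U)$, so $f(U)$ is open and $f$ is an open map.

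The step I expect to be the main obstacle is the final equivalence $\{y\}\ {\sn}_Y\ B\Leftrightarrow y\in\Int_Y B$. This clean form is only guaranteed when $B$ is not itself a singleton, since in the both-singletons case the characterization degenerates to $\{x\}\ {\sn}\ \{y\}\Leftrightarrow x=y$ and says nothing about interiors. I would therefore dispose separately of the degenerate situation in which $f(U)$ collapses to a single point, and I would spell out precisely what the compatibility of ${\sn}_X$ with $\tau_X$ and of ${\sn}_Y$ with $\tau_Y$ must supply, so that (snN5) holds in $X$ and the singleton characterization holds in $Y$. It is exactly this compatibility, rather than the bare axioms \textbf{(snN0)}--\textbf{(snN6)}, that makes the interior characterization available and hence drives the whole argument.
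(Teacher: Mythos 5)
You should know at the outset that the paper contains no proof of this theorem: it is imported by citation from \cite{PetersGuadagni2015strongConnectedness}, so the only benchmark is the argument in that source, and your pointwise argument is exactly it. Namely, (snN5) (equivalently, compatibility of ${\sn}_X$ with $\tau_X$) gives $\{x_0\}\ {\sn}_X\ U$ for $x_0$ in an open set $U$; strong proximal continuity pushes this to $\{f(x_0)\}\ {\sn}_Y\ f(U)$; and the converse, compatibility-supplied characterization $\{y\}\ {\sn}_Y\ B \Leftrightarrow y\in \Int_Y B$ turns this into $f(x_0)\in \Int_Y f(U)$, whence $f(U)=\Int_Y f(U)$ is open. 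Your closing remark that it is compatibility of the proximities with the topologies, and not the bare axioms (snN0)--(snN6), that powers the last step is also exactly right.

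The caveat concerns the degenerate case you promise to ``dispose of separately'': under the axioms as stated in this paper, it cannot be disposed of. If $f$ is constant with value $y$, then $f$ is s.p.c. --- whenever $A\ {\sn}_X\ B$, both $A$ and $B$ are nonempty, so the required conclusion $f(A)\ {\sn}_Y\ f(B)$ reduces to $\{y\}\ {\sn}_Y\ \{y\}$, which holds by (snN6) --- and yet $f(U)=\{y\}$ is open only when the singleton $\{y\}$ is open in $Y$; for $Y=\mathbb{R}^n$ with its usual topology this fails. So, read literally, the theorem is false for such maps. The compatible-proximity convention quoted in the paper (``if $A = \{x\}$, then $x \in \Int(B)$, and if $B$ too is a singleton, then $x=y$'') is ambiguous on precisely this point: either it overrides (snN6), so that $\{y\}\ {\sn}_Y\ \{y\}$ forces $\{y\}$ to be open and constant maps into such $Y$ are simply not s.p.c., or the singleton image case is an implicit exclusion from the theorem. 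Your proof is as complete as the cited source's; the residual gap lies in the statement and its axiomatic conventions, not in your strategy.
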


Let $\righthalfcap x$ be any point in $X\setminus \left\{x\right\}$ that is not $x\in X$.  The Borsuk-Ulam Theorem (BUT) has many different region-based incarnations.

\begin{theorem}\label{thm:Borsuk}~{\rm \cite{PetersGuadagni2015strongConnectedness}}
If $f:S^n\longrightarrow \mathbb{R}^n$ is $\sn$-continuous (s.p.c.), then $f(x) = f(\righthalfcap x)$ for some $x\in X$.
\end{theorem}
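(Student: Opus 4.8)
The plan is to reduce the assertion to the classical Borsuk--Ulam Theorem (Theorem~\ref{thm:Borsuk-Ulam1933one}), using strong proximal continuity only to recover the continuity hypothesis that the classical result requires. Note first that the conclusion here is in fact \emph{weaker} than the classical one: since the ordinary antipode $-x$ lies in $S^n\setminus\{x\}$, it is an admissible choice of $\righthalfcap x$, so it suffices to produce a single point $x$ with $f(x)=f(-x)$.

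First I would show that an s.p.c. map $f\colon S^n\to\mathbb{R}^n$ between spaces carrying compatible strong proximities ${\sn}_X$ and ${\sn}_Y$ is continuous with respect to the underlying topologies. The idea is to use axiom (snN5), $x\in\Int(A)\Rightarrow x\ \sn\ A$, together with (snN4), to translate a basic neighbourhood $V$ of $f(x)$ into a neighbourhood $U$ of $x$ with $f(U)\subseteq V$: were no such $U$ to exist, one could manufacture a set $A\ni x$ that is strongly near a set whose image is forced to miss $V$, contradicting the defining implication $A\ {\sn}_X\ B\Rightarrow f(A)\ {\sn}_Y\ f(B)$ together with (snN2). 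Theorem~\ref{open} already records that $f$ is open, so this step upgrades $f$ to a continuous (indeed open) map, precisely the setting of the classical theorem.

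With continuity in hand, I would invoke Theorem~\ref{thm:Borsuk-Ulam1933one} to obtain a point $x\in S^n\subseteq\mathbb{R}^{n+1}$ with $f(x)=f(-x)$, and then set $\righthalfcap x\assign -x\in S^n\setminus\{x\}$, which yields $f(x)=f(\righthalfcap x)$ and completes the argument. An alternative, self-contained route avoids citing the classical statement: assuming $f(x)\neq f(-x)$ for every $x$, the odd map $x\mapsto\bigl(f(x)-f(-x)\bigr)/\norm{f(x)-f(-x)}$ sends $S^n$ to $S^{n-1}$, and one derives a contradiction from the non-existence of an odd continuous map $S^n\to S^{n-1}$; strong proximal continuity enters only to guarantee that this auxiliary map is continuous and odd.

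The main obstacle is the first step: strong proximal continuity, as axiomatized by (snN0)--(snN6), is \emph{a priori} about preservation of the ``sharing-a-point'' relation $\sn$ and, via Theorem~\ref{open}, yields openness rather than continuity. I expect the delicate point to be showing that compatibility of the strong proximities with the topologies forces topological continuity as well, since openness alone cannot drive the Borsuk--Ulam conclusion --- a discontinuous antipode-avoiding map, such as the angular coordinate on $S^1$, shows that continuity cannot be dropped. Once continuity is secured, the remainder is immediate.
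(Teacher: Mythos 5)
Your reduction stands or falls with its first step --- that strong proximal continuity implies topological continuity --- and that step is false, not merely unproven. What the axioms (snN0)--(snN6) actually yield, and what Theorem~\ref{open} records, is \emph{openness}; continuity does not follow. Concretely, partition $S^n$ into continuum many pairwise disjoint dense sets $\left\{D_y\right\}_{y\in\mathbb{R}^n}$ indexed by the points of $\mathbb{R}^n$ and define $f(x)=y$ for $x\in D_y$. Then $f(A)=\mathbb{R}^n$ whenever $\Int A\neq\emptyset$, so every instance of the s.p.c. implication (including the singleton conventions (snN5), (snN6)) holds trivially, yet $f$ is nowhere continuous. Worse, the partition can be arranged (by transfinite recursion) so that no $D_y$ contains an antipodal pair, giving $f(x)\neq f(-x)$ for \emph{every} $x\in S^n$: the intermediate target of your reduction --- a coincidence at exact antipodes --- can genuinely fail for s.p.c. maps, and the same counterexample defeats your alternative route via the odd map into $S^{n-1}$, which also presupposes continuity. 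The theorem survives only because its conclusion is far weaker than the classical one: since $\righthalfcap x$ may be \emph{any} point of $X\setminus\left\{x\right\}$, the assertion amounts to non-injectivity of $f$, which is all that can be extracted from the proximal hypotheses.

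The paper's own proof takes an entirely different, purely proximal route: it applies the s.p.c. implication to the singletons $A=\left\{x\right\}$, $B=\left\{\righthalfcap x\right\}$ and then reads axiom (snN6) --- singletons are strongly near if and only if they coincide --- in the codomain to conclude $f(x)=f(\righthalfcap x)$; no topology of $S^n$, no classical Borsuk--Ulam theorem, and no continuity appear anywhere (which is exactly why the paper can immediately restate the result as Theorem~\ref{thm:BorsukX} with an arbitrary space $X$ in place of $S^n$, something your approach could never do). One may object that this argument is itself delicate, since for $x\neq\righthalfcap x$ the hypothesis $\left\{x\right\}\ \sn\ \left\{\righthalfcap x\right\}$ never holds; but in any case its mechanism is axiomatic, not topological. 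If you want an argument that is sound and stays close to your topological instincts, replace continuity by openness: by Theorem~\ref{open} an s.p.c. map is open, and if $f$ were injective then $f(S^n)$ would be open in $\mathbb{R}^n$ and $f^{-1}\colon f(S^n)\longrightarrow S^n$ would be a continuous bijection, hence a homeomorphism by invariance of domain, contradicting the compactness of $S^n$. That yields precisely the non-injectivity conclusion of Theorem~\ref{thm:Borsuk} without ever claiming that $f$ is continuous.
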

\begin{proof}
The mapping $f$ is s.p.c. if and only if $A\ {\sn}_{S^n}\ B$ implies $f(A)\ {\sn}_{\mathbb{R}^n}\ f(B)$.  Let $A = \left\{x\right\}, B = \left\{\righthalfcap x\right\}$ for some $x\in X, \righthalfcap x\in X\setminus \left\{x\right\}$.  From $\sn$-Axiom (snN6), $\left\{x\right\}\ \sn\ \left\{\righthalfcap x\right\}\Leftrightarrow \left\{x\right\} = \left\{\righthalfcap x\right\}$.  Hence,
$f(x) = f(\righthalfcap x)$.
\end{proof}

\setlength{\intextsep}{0pt}
\begin{wrapfigure}[11]{R}{0.35\textwidth}
\begin{minipage}{5.0 cm}
\centering
\includegraphics[width=40mm]{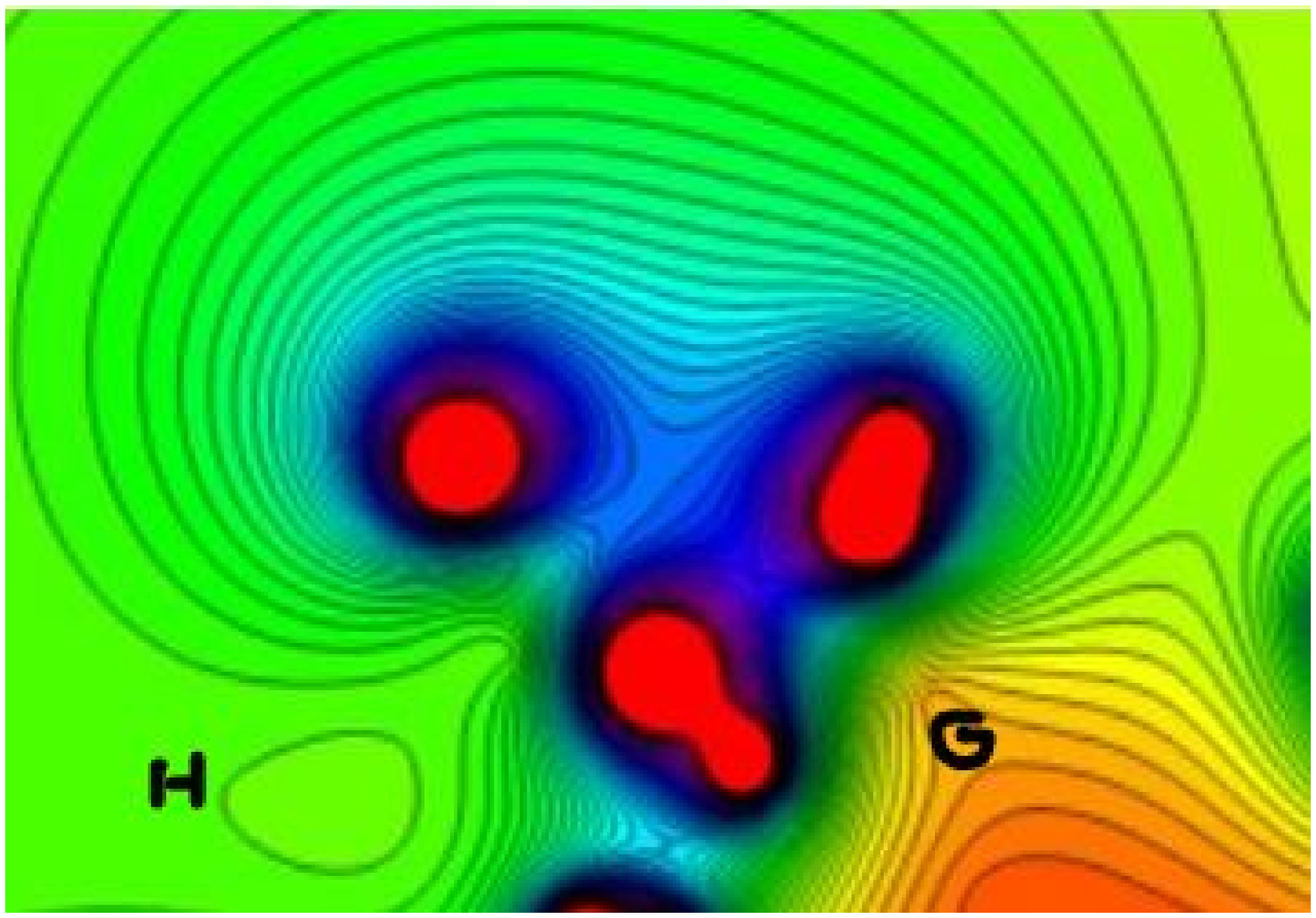}
\caption[]{Antipodal strings $H,G$}
\label{fig:worldlinesHG}
\end{minipage}
\end{wrapfigure}

\begin{corollary}
If $f:S^n\longrightarrow \mathbb{R}^n$ is $\sn$-continuous (s.p.c.), then $f(x) = f(\righthalfcap x)$ for some $x\in X$.
\end{corollary}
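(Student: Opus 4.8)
The plan is to read this corollary as a restatement of Theorem~\ref{thm:Borsuk}: the hypotheses and the conclusion coincide word for word, so I would simply specialize the strong-proximity argument already in hand rather than invent a new one. First I would unwind the definition of \emph{strongly proximal continuity}. For subsets $A,B\subset S^n$, the assumption that $f$ is s.p.c. means $A\ \sn\ B \Rightarrow f(A)\ \sn\ f(B)$ in $\mathbb{R}^n$. The relevant specialization is to an antipodal pair, so I would take the singletons $A=\{x\}$ and $B=\{\righthalfcap x\}$, where $\righthalfcap x$ is the antipode of $x$ on $S^n$.

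The second step is to feed these singletons through the singleton axiom (snN6), namely $\{x\}\ \sn\ \{y\}\Leftrightarrow x=y$. Once a point $x$ is available for which $\{x\}$ and $\{\righthalfcap x\}$ are in strong contact, the s.p.c. implication forces $\{f(x)\}\ \sn\ \{f(\righthalfcap x)\}$ in $\mathbb{R}^n$, and a second application of (snN6) in the target space gives $f(x)=f(\righthalfcap x)$, which is precisely the asserted coincidence. Structurally this is the same two-step passage --- descend to singletons, transport strong contact across $f$, re-read it through (snN6) --- that drove the proof of Theorem~\ref{thm:Borsuk}.

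The step I expect to be the real obstacle is the existence buried in the phrase \emph{for some $x$}. Axiom (snN6) makes $\{x\}\ \sn\ \{\righthalfcap x\}$ hold only when $x=\righthalfcap x$, so the strong-proximity apparatus by itself produces no antipodal coincidence point; that point must be supplied from outside. To close the gap I would invoke the classical Borsuk--Ulam Theorem (Theorem~\ref{thm:Borsuk-Ulam1933one}), which guarantees, for a continuous $f:S^n\to\mathbb{R}^n$, an $x$ with $f(x)=f(\righthalfcap x)$; within the present framework an s.p.c. map is treated as the continuous map of record (cf. its openness in Theorem~\ref{open}), so this existence input is legitimately available. The delicate point, then, is not the algebra of the axioms but making explicit that the coincidence point itself rests on the underlying classical theorem rather than on the singleton manipulation alone.
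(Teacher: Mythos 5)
Your core argument is exactly the paper's: this corollary is a verbatim restatement of Theorem~\ref{thm:Borsuk}, and the paper's proof of that theorem is precisely the two-step passage you describe --- specialize to singletons $\{x\},\{\righthalfcap x\}$, transport strong contact across the s.p.c.\ map, and read the result through (snN6). Where you depart from the paper is in your third paragraph, and the departure is to your credit: you correctly diagnose that the singleton manipulation is vacuous. Since $\righthalfcap x$ is by definition a point of $X\setminus\{x\}$, axiom (snN6) says $\{x\}$ and $\{\righthalfcap x\}$ are \emph{not} strongly near, so the s.p.c.\ implication never fires and no coincidence point is produced. The paper's proof simply asserts ``Hence, $f(x)=f(\righthalfcap x)$'' at exactly this point, with no external input; you, by contrast, make the missing existence explicit and try to supply it.

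The trouble is that your patch is not available inside the paper's framework. The classical Borsuk--Ulam Theorem (Theorem~\ref{thm:Borsuk-Ulam1933one}) requires $f$ to be topologically continuous, and the hypothesis here is only that $f$ is s.p.c. The paper's Theorem~\ref{open} gives that an s.p.c.\ map is \emph{open}, and openness is not continuity; nothing in the paper shows that an s.p.c.\ map $S^n\to\mathbb{R}^n$ is continuous in the sense needed to invoke the classical theorem. Moreover, $\righthalfcap x$ in this corollary is an arbitrary point distinct from $x$, not the geometric antipode $-x$, so even granting continuity the classical theorem delivers a different statement (coincidence at $-x$) than the one being proved. So your proposal is more honest than the paper's proof --- it isolates the genuine gap --- but the gap remains: neither the paper's singleton argument nor your appeal to Theorem~\ref{thm:Borsuk-Ulam1933one} actually closes the existence claim under the stated hypotheses.
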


Since the proof of Theorem~\ref{thm:Borsuk} depends on the domain and range of mapping $f$ being compatible topological spaces equipped with a s.p.c. map and does not depend on the geometry of $S^n$, we have

\begin{theorem}\label{thm:BorsukX}~{\rm \cite{PetersGuadagni2015strongConnectedness}}
Let $(X, \tau_X, {\sn}_X) $ and $(\mathbb{R}^n, \tau_{\mathbb{R}^n}, {\sn}_{\mathbb{R}^n})$ be topological spaces endowed with compatible strong proximities.
If $f:X\longrightarrow \mathbb{R}^n$ is $\sn$-continuous (s.p.c.), then $f(x) = f(\righthalfcap x)$ for some $x\in X$.
\end{theorem}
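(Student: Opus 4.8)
The plan is to recognize that Theorem~\ref{thm:BorsukX} is simply Theorem~\ref{thm:Borsuk} with the sphere $S^n$ replaced by an arbitrary space $X$, and that the proof already supplied for Theorem~\ref{thm:Borsuk} never invoked the geometry of $S^n$. So first I would return to that earlier argument and isolate exactly which hypotheses it consumes. It uses only two ingredients: the definition of strong proximal continuity, which guarantees that $A\ {\sn}_X\ B \Rightarrow f(A)\ {\sn}_{\mathbb{R}^n}\ f(B)$ for all $A,B\subset X$, and the singleton axiom (snN6), which is a purely proximity-theoretic statement available in any strong proximity space. Neither ingredient refers to $S^n$ sitting inside $\mathbb{R}^{n+1}$, nor to a metric notion of antipode, nor to any sphere-specific structure; the hypothesis that both spaces carry compatible strong proximities and that $f$ is s.p.c. is all that the chain of implications ever touches.

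Then I would carry the earlier argument over essentially verbatim. Fix $x\in X$ and let $\righthalfcap x$ denote a point of $X\setminus\{x\}$, and put $A=\{x\}$, $B=\{\righthalfcap x\}$. Applying the s.p.c. hypothesis to this pair transfers the strong-nearness relation on $X$ to the corresponding relation between the image singletons $\{f(x)\}$ and $\{f(\righthalfcap x)\}$ in the codomain. Invoking (snN6) in $(\mathbb{R}^n,{\sn}_{\mathbb{R}^n})$ then forces $f(x)=f(\righthalfcap x)$, which is exactly the conclusion reached in the proof of Theorem~\ref{thm:Borsuk}. Since every step goes through without a single appeal to the nature of $S^n$, the replacement of $S^n$ by $X$ costs nothing.

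The step I expect to be the real obstacle is not computational but definitional: pinning down what \emph{compatible strong proximities} must mean for this transfer to be legitimate. The argument presupposes that the singleton axiom (snN6) is simultaneously available in both $(X,{\sn}_X)$ and $(\mathbb{R}^n,{\sn}_{\mathbb{R}^n})$, and that the definition of s.p.c. genuinely applies to the degenerate pair of singletons $A=\{x\}$, $B=\{\righthalfcap x\}$ rather than only to sets with nonempty interior. Once \emph{compatibility} is read as supplying precisely these features, the conclusion is immediate and no further work remains; the entire content of the theorem is the observation that the preceding proof is geometry-free and hence survives the passage from the sphere to an arbitrary compatible strong proximity space.
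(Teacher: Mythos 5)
Your proposal matches the paper's own treatment exactly: the paper justifies Theorem~\ref{thm:BorsukX} with the single remark that the proof of Theorem~\ref{thm:Borsuk} depends only on the domain and range being compatible spaces with an s.p.c.\ map and not on the geometry of $S^n$, which is precisely your transfer argument, down to reusing the singleton choice $A=\{x\}$, $B=\{\righthalfcap x\}$ and axiom (snN6) in the codomain. Your closing observation that everything hinges on how \emph{compatible strong proximities} is interpreted is a caveat the paper leaves implicit, but it does not change the route.
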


\begin{corollary}
Let $X$ be a nonempty set of strings.  If $f:S^n\longrightarrow \mathbb{R}^n$ is $\sn$-continuous (s.p.c.), then $f(x) = f(\righthalfcap x)$ for some $x\in X$.
\end{corollary}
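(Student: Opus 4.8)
The plan is to obtain this statement as an immediate specialization of Theorem~\ref{thm:BorsukX}, in which the underlying strong proximity space happens to consist of strings. Nothing genuinely new is required: the argument used for Theorem~\ref{thm:Borsuk} appeals only to the strong proximity structure and to axiom (snN6), and never to the nature of the individual elements of the domain. So the strategy is to check that a set of strings supplies a legitimate instance of that structure and then replay the earlier reasoning.

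First I would observe that a nonempty set of strings $X$, equipped with the strong proximity $\sn$ inherited from the string-covering of $S^n$, is a strong proximity space, and that together with $(\mathbb{R}^n,\tau_{\mathbb{R}^n},\sn_{\mathbb{R}^n})$ it meets the compatibility hypothesis of Theorem~\ref{thm:BorsukX}. In this reading each ``point'' of the domain is a string $x\in X$, and its antipode is a second string $\righthalfcap x$, so the conclusion $f(x)=f(\righthalfcap x)$ is a statement about antipodal strings rather than antipodal points.

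Next, since $f$ is s.p.c., I would invoke the defining implication $A\ \sn\ B\Rightarrow f(A)\ \sn\ f(B)$ with $A=\{x\}$ and $B=\{\righthalfcap x\}$. Axiom (snN6) gives $\{x\}\ \sn\ \{\righthalfcap x\}\Leftrightarrow x=\righthalfcap x$, and applying $f$ then yields $f(x)=f(\righthalfcap x)$ for such an $x\in X$, exactly as in the proof of Theorem~\ref{thm:Borsuk}. Because that proof is geometry-free and rests solely on the strong proximity axioms, it transfers to the string-valued setting without alteration.

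The main obstacle I anticipate is interpretive rather than computational: one must fix the meaning of $f$ evaluated at a string. Since a string is a region $\str x$, the value $f(x)$ should be read as the image set $f(\str x)\subset\mathbb{R}^n$, and one must confirm that the strong contact relation between a string and its antipode is preserved by the s.p.c. map so that the singleton reasoning of (snN6) applies at the level of strings. Once this bookkeeping between point-level and string-level proximities is settled, the result follows verbatim from Theorem~\ref{thm:Borsuk} and Theorem~\ref{thm:BorsukX}.
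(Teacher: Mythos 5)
Your proposal matches the paper's own treatment: the corollary is stated there without proof, as an immediate specialization of Theorem~\ref{thm:BorsukX} to a space of strings, with the underlying argument being exactly the singleton/(snN6) reasoning from the proof of Theorem~\ref{thm:Borsuk} that you replay. Your closing remark on interpreting $f$ at a string is a reasonable piece of bookkeeping the paper glosses over, but it does not change the route, which is essentially identical to the paper's.
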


\section{String-Based Borsuk Ulam Theorem (strBUT)}
This section considers string-based forms of the Borsuk Ulam Theorem (strBUT).  
Recall that a \emph{region} is limited to a set in a metric topological space.   By definition, a string on the surface of an $n$-sphere is a line that represents the path traced by moving particle along the surface of the $S^n$.  Disjoint strings on the surface of $S^n$ that are antipodal and with matching description are descriptively near \emph{antipodal strings}.    A pair of strings $A,\righthalfcap A$ are \emph{antipodal}, provided, for some $p\in A, q\in \righthalfcap A$, there exist disjoint parallel hyperplanes $P,Q\subset S^n$ such that $p\in P$ and $q\in Q$.   Such strings can be spatially far apart and also descriptively near.

\begin{example}\label{ex:antipodalStrings}
A pair of antipodal strings are represented by $H,G$ in Fig.~\ref{fig:worldlinesHG}.  Strings $H,G$ are antipodal, since they have no points in common.  Let bounded shape be a feature of a string.  A shape is bounded, provided the shape is surrounded (contained in) another shape. Then $H,G$ are descriptively near, since they are both bounded shapes.
\qquad \textcolor{blue}{$\blacksquare$}
\end{example}

We are interested in the case where strongly near strings are mapped to strongly near strings in a feature space.  To arrive at a string-based form of Theorem~\ref{thm:BorsukX}, we introduce region-based strong proximal continuity.  Let $X$ be a nonempty set and let $2^X$ denote the family of all subsets in $X$.  For example, $2^{S^n}$ is the family of all subsets on the surface of an $n$-sphere.
$\mbox{}$\\
\vspace{3mm}

\begin{definition}{\rm ~\cite[\S 5.7]{Peters2016ISRLcomputationalProximity}}.\\
Let $X,Y$ be nonempty sets.  Suppose that $(2^X, \tau_{2^X}, \dnear)$ and $(Y, \tau_{Y}, \dnear)$ are topological spaces endowed with strong proximities.  We say that $f: 2^X \rightarrow Y$ is \emph{region strongly proximal continuous} and we write \emph{\bf Re.d.p.c.} if and only if, for $A, B \in 2^X$, 
\[
\ A\ \dnear\ B \Rightarrow f(A)\ \dnear\ f(B). \mbox{\qquad \textcolor{blue}{$\blacksquare$}}
\] 
\end{definition} 

\begin{lemma}\label{lemma:redBorsuk}~{\rm \cite{PetersGuadagni2015strongConnectedness}}
Suppose that $(2^{\mathbb{R}^n}, \tau_{2^{\mathbb{R}^n}}, \dnear) $ and $(\mathbb{R}^n, \tau_{\mathbb{R}^n}, \dnear)$ are topological spaces endowed with compatible descriptive proximities and $f$ is a $\dnear$ Re.d.p.c. continuous mapping on the family of regions $2^{\mathbb{R}^n}$ into $\mathbb{R}^n$.
If $f(A)\in \mathbb{R}^n$ is a description common to antipodal regions $A,\righthalfcap A\in 2^{\mathbb{R}^n}\setminus A$, then $f(A) = f(\righthalfcap A)$ for some $\righthalfcap A$.
\end{lemma}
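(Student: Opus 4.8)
The plan is to follow the template of the proof of Theorem~\ref{thm:Borsuk}, replacing the strong proximity $\sn$ by the descriptive proximity $\dnear$ and replacing points by the antipodal regions $A,\righthalfcap A$. The whole argument rests on three moves: converting the hypothesis that $f(A)$ is a description common to $A$ and $\righthalfcap A$ into descriptive nearness of the two regions, pushing that nearness through $f$ via the Re.d.p.c.\ assumption, and then reading off equality of the image descriptions.

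First I would observe that the hypothesis supplies points $p\in A$ and $q\in \righthalfcap A$ carrying a common feature vector $\Phi(p)=f(A)=\Phi(q)$. Hence such a $p$ lies in the descriptive intersection, so $A\ \dcap\ \righthalfcap A\neq\emptyset$, and axiom (dP2) immediately yields $A\ \dnear\ \righthalfcap A$. Equivalently, the defining condition for $\dnear$ is met, since the two regions share a description even though they are antipodal and spatially far apart.

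Next I would invoke region descriptive proximal continuity: because $f$ is Re.d.p.c.\ and $A\ \dnear\ \righthalfcap A$, the defining implication gives $f(A)\ \dnear\ f(\righthalfcap A)$. Since $f$ takes its values in $\mathbb{R}^n$, both $f(A)$ and $f(\righthalfcap A)$ are points, so this amounts to descriptive nearness of the singletons $\{f(A)\}$ and $\{f(\righthalfcap A)\}$.

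The final, and main, step is to pass from $\{f(A)\}\ \dnear\ \{f(\righthalfcap A)\}$ to the equality $f(A)=f(\righthalfcap A)$. Here the descriptive separation axiom (dP5) gives $\Phi(f(A))=\Phi(f(\righthalfcap A))$, i.e.\ the two image points carry matching descriptions. I expect this to be the delicate point, precisely because the descriptive analogue (dP5) of the strong-proximity axiom (snN6) delivers only matching descriptions rather than literal equality; closing the gap requires the compatibility of the two descriptive proximities together with the feature-space identification under which a point of $\mathbb{R}^n$ is its own description, so that matching descriptions coincide with equality and $f(A)=f(\righthalfcap A)$ follows.
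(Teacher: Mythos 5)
The first thing to note is that the paper contains no proof of Lemma~\ref{lemma:redBorsuk} at all: the lemma is imported by citation from \cite{PetersGuadagni2015strongConnectedness}, with no proof environment following it. So your proposal cannot be compared against an in-paper argument; the closest internal benchmark is the proof of Theorem~\ref{thm:Borsuk}, whose template (apply the proximal-continuity hypothesis to a pair of antipodes, then invoke the singleton axiom to extract equality) is exactly what you follow, transposed from $\sn$ and (snN6) to $\dnear$, (dP2) and (dP5). Judged on its own terms your chain is sound: a common description gives a nonempty descriptive intersection, hence $A\ \dnear\ \righthalfcap A$ by (dP2); the Re.d.p.c.\ hypothesis pushes this to $f(A)\ \dnear\ f(\righthalfcap A)$; and in the feature space, where a point of $\mathbb{R}^n$ is its own description ($\Phi$ the identity), descriptive nearness of the singletons $\{f(A)\}$, $\{f(\righthalfcap A)\}$ collapses to equality.

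Two caveats, both of which you at least partially flag. Your opening move converts a region-level hypothesis ($f(A)$ describes the whole region $\righthalfcap A$) into a point-level one (there exist $p\in A$, $q\in\righthalfcap A$ with $\Phi(p)=f(A)=\Phi(q)$); since $\dnear$ and $\dcap$ are defined in the paper through descriptions of points, this identification is genuinely needed and deserves to be stated as an explicit assumption rather than as something the hypothesis ``supplies.'' And the final step rests on (dP5), which the paper treats as an optional separation axiom for descriptive Lodato proximities (unlike (snN6) and (dsnP6), which it assumes outright for strong proximities); your repair via the identification $\Phi=\mathrm{id}$ on $\mathbb{R}^n$ is the right one, since (dP5) then holds trivially. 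It is also worth observing that under that same identification the lemma becomes nearly tautological: $f$ assigns each region exactly one description, so if $f(A)$ is a description of $\righthalfcap A$ then $f(A)=f(\righthalfcap A)$ immediately, without invoking (dP2) or proximal continuity at all.
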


Lemma~\ref{lemma:redBorsuk} is restricted to regions in $2^{\mathbb{R}^n}$ described by feature vectors in an $n$-dimensional Euclidean space $\mathbb{R}^n$.  Next, consider regions on the surface of an $n$-sphere $S^n$.  Each feature vector $f(A)$ in $\mathbb{R}^n$ describes a region $A\in 2^{S^n}$.  Then we obtain the following result.

\begin{theorem}\label{thm:redSnBUT}~{\rm \cite{PetersGuadagni2015strongConnectedness}}
Suppose that $(2^{S^n}, \tau_{2^{S^n}}, \snd)$ and $(\mathbb{R}^n, \tau_{\mathbb{R}^n}, \snd)$ are topological spaces endowed with compatible strong proximities.  Let $A\in 2^{S^n}$, a region in the family of regions in $2^{S^n}$.
If $f:2^{S^n}\longrightarrow \mathbb{R}^n$ is $\dnear$ Re.d.p.c. continuous, then $f(A) = f(\righthalfcap A)$ for antipodal region $\righthalfcap A\in 2^{S^n}$.
\end{theorem}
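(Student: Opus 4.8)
The plan is to reduce the statement to Lemma~\ref{lemma:redBorsuk} by observing that the argument behind that lemma uses only the descriptive proximity structure together with the singleton separation behaviour of $\dnear$, and never the particular geometry of the ambient space. The text preceding the theorem already supplies the bridge: each feature vector $f(A)\in\mathbb{R}^n$ describes a region $A\in 2^{S^n}$, so the description map $\Phi$ sends regions of $S^n$ into $\mathbb{R}^n$ in exactly the way it sent regions of $\mathbb{R}^n$ in the earlier lemma. Thus I would first transport the hypotheses of Lemma~\ref{lemma:redBorsuk} from $2^{\mathbb{R}^n}$ to $2^{S^n}$ verbatim, noting that compatibility of the strong proximities on $2^{S^n}$ and $\mathbb{R}^n$ is assumed.

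Next I would establish that the antipodal regions $A,\righthalfcap A$ are descriptively near. Although antipodal regions are disjoint (or merely contain disjoint substrings), the hypothesis that $A$ and $\righthalfcap A$ carry matching descriptions means precisely that $\Phi(A)\cap\Phi(\righthalfcap A)\neq\emptyset$, that is, $A\ \dcap\ \righthalfcap A\neq\emptyset$. Axiom (dP2) then yields $A\ \dnear\ \righthalfcap A$. Applying the Re.d.p.c. hypothesis to this pair gives $f(A)\ \dnear\ f(\righthalfcap A)$.

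Finally I would collapse descriptive nearness of the images to genuine equality. Since $f:2^{S^n}\to\mathbb{R}^n$, the images $f(A)$ and $f(\righthalfcap A)$ are points of $\mathbb{R}^n$, so I may read them as singletons and invoke the singleton axiom (dsnP6), equivalently the separation axiom (dP5): $\{f(A)\}\ \dnear\ \{f(\righthalfcap A)\}$ forces $\Phi(f(A))=\Phi(f(\righthalfcap A))$. Because a point of $\mathbb{R}^n$ is its own feature description, this equality of descriptions is the equality $f(A)=f(\righthalfcap A)$ sought, with $\righthalfcap A$ the antipodal region supplied by the Borsuk--Ulam structure of $S^n$.

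The main obstacle I anticipate is precisely the step that turns antipodality plus matching descriptions into descriptive nearness: one must argue that the shared feature vector genuinely sits in both $\Phi(A)$ and $\Phi(\righthalfcap A)$, and hence in the descriptive intersection, for otherwise (dP2) does not apply and the chain breaks. A secondary point requiring care is the mismatch between the strong descriptive proximity $\snd$ imposed on the two spaces and the ordinary descriptive proximity $\dnear$ named in the Re.d.p.c. hypothesis; I would lean on the compatibility assumption, together with the fact that strong nearness always entails the weaker nearness (since $A\ \snd\ B$ implies $A\ \dcap\ B\neq\emptyset$, whence $A\ \dnear\ B$ by (dP2)), to ensure the singleton argument transfers intact.
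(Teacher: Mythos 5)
Your reduction to Lemma~\ref{lemma:redBorsuk} is exactly the paper's own route: the theorem is imported from \cite{PetersGuadagni2015strongConnectedness} with no written proof, and the paragraph immediately preceding it justifies the statement precisely by transporting the Lemma from regions in $2^{\mathbb{R}^n}$ to regions $A\in 2^{S^n}$ described by feature vectors $f(A)\in\mathbb{R}^n$, which is what you do. Your further unpacking --- (dP2) to pass from a common description to $A\ \dnear\ \righthalfcap A$, the Re.d.p.c. hypothesis to obtain $f(A)\ \dnear\ f(\righthalfcap A)$, and the singleton separation axiom to collapse nearness of image points to equality --- is consistent with the paper's technique elsewhere (compare the proof of Theorem~\ref{thm:Borsuk} via (snN6)), and your reading of the matching-description condition as a hypothesis rather than a derived fact matches how Lemma~\ref{lemma:redBorsuk} itself is phrased.
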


\begin{theorem}\label{cor:strBUTsimplest}
Suppose that $(2^{S^n}, \tau_{2^{S^n}}, \snd)$ and $(\mathbb{R}^n, \tau_{\mathbb{R}^n}, \snd)$ are topological spaces endowed with compatible strong proximities.  Let $A\in 2^{S^n}$, a string in the family of strings in $2^{S^n}$.
If $f:2^{S^n}\longrightarrow \mathbb{R}^n$ is $\dnear$ Re.d.p.c. continuous, then $f(A) = f(\righthalfcap A)$ for antipodal string $\righthalfcap A\in 2^{S^n}$.
\end{theorem}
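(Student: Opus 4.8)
The plan is to obtain this statement as the string-level specialization of Theorem~\ref{thm:redSnBUT}. As recorded in the preliminaries, every string is a spatial region while not every region is a string; hence the collection of strings on the surface of $S^n$ sits inside the family of regions $2^{S^n}$, and a string $A$ paired with an antipodal string $\righthalfcap A$ is a particular instance of a region paired with an antipodal region. The two spaces $(2^{S^n},\tau_{2^{S^n}},\snd)$ and $(\mathbb{R}^n,\tau_{\mathbb{R}^n},\snd)$ carry exactly the compatible strong proximities assumed in Theorem~\ref{thm:redSnBUT}, and the hypothesis that $f$ is $\dnear$ Re.d.p.c. is word-for-word the same. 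So the entire apparatus of Theorem~\ref{thm:redSnBUT} is available, and the task reduces to checking that string antipodality is nothing more than region antipodality.

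First I would reconcile the two notions of antipodality. For strings on $S^n$, the pair $A,\righthalfcap A$ is antipodal provided there exist points $p\in A$ and $q\in \righthalfcap A$ lying in disjoint parallel hyperplanes $P,Q\subset S^n$, which is precisely the relaxed Petty condition that the paper uses to call regions antipodal. Thus ``$A,\righthalfcap A$ are antipodal strings'' and ``$A,\righthalfcap A$ are antipodal regions'' are the same assertion, and restricting attention to strings adds no requirement that could fail. Then I would run the argument of Theorem~\ref{thm:redSnBUT} verbatim: antipodal strings that share a description are descriptively near, so $A\ \dnear\ \righthalfcap A$; the Re.d.p.c. property of $f$ yields $f(A)\ \dnear\ f(\righthalfcap A)$ in $\mathbb{R}^n$; and, treating the images as points of the feature space $\mathbb{R}^n$, axiom {\rm\bf (dsnP6)} forces their descriptions to coincide, identifying the two images and giving $f(A)=f(\righthalfcap A)$ for the antipodal string $\righthalfcap A$.

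The main obstacle here is bookkeeping rather than mathematics. The only place a gap could hide is in the passage from arbitrary regions to the narrower class of strings, and since every string is a region, since antipodal-string pairs are exactly antipodal-region pairs, and since the proximity structures and the continuity hypothesis are inherited unchanged, no hypothesis of Theorem~\ref{thm:redSnBUT} is lost. The statement therefore follows at once by applying Theorem~\ref{thm:redSnBUT} to the subfamily of strings in $2^{S^n}$.
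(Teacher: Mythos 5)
Your proposal is correct and follows essentially the same route as the paper: the paper's own proof simply swaps the regions $A,\righthalfcap A$ of Theorem~\ref{thm:redSnBUT} for strings $\str A, \str(\righthalfcap A)\in 2^{S^n}$, exactly the specialization you carry out. Your additional checks (that string antipodality coincides with region antipodality and that the proximity and continuity hypotheses are inherited) just make explicit the bookkeeping the paper leaves implicit.
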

\begin{proof}
Let each string be a spatial subregion of $2^{S^n}$.  Let $\str A, str(\righthalfcap A)\in 2^{S^n}$.  Swap out $A,\righthalfcap A\in 2^{\mathbb{R}^n}$ with $\str A, str(\righthalfcap A)\in 2^{S^n}$ in Theorem~\ref{thm:redSnBUT} and the result follows.
\end{proof} 

\begin{theorem}\label{cor:strBUTsheet}
Suppose that $(2^{S^n}, \tau_{2^{S^n}}, \snd)$ and $(\mathbb{R}^n, \tau_{\mathbb{R}^n}, \snd)$ are topological spaces endowed with compatible strong proximities.  Let $\sh M\in 2^{S^n}$ be a worldsheet in the family of worldsheets in $2^{S^n}$.
If $f:2^{S^n}\longrightarrow \mathbb{R}^n$ is $\dnear$ Re.d.p.c. continuous, then $f(\sh M) = f(\righthalfcap \sh M)$ for antipodal $\righthalfcap \sh M\in 2^{S^n}$.
\end{theorem}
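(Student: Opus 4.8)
The plan is to mirror the reduction used for Theorem~\ref{cor:strBUTsimplest}, recasting the worldsheet case as an instance of the region-based result in Theorem~\ref{thm:redSnBUT}. The first step is to observe that a worldsheet $\sh M$ is, by definition, a nonempty region of $S^n$: every subregion of $\sh M$ contains at least one string, so $\sh M$ itself is a member of the family $2^{S^n}$. Hence each worldsheet already qualifies as a region in the sense required by Theorem~\ref{thm:redSnBUT}, and the map $f:2^{S^n}\longrightarrow\mathbb{R}^n$ acts on a worldsheet exactly as it acts on an arbitrary region.

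Second, I would verify that the antipodal relation on worldsheets is compatible with the antipodal relation on regions. Recall that $\sh M,\righthalfcap \sh M$ are antipodal provided there is at least one string $\str A\in \sh M$ and $\str B\in \righthalfcap \sh M$ with $\str A\cap \str B=\emptyset$. Since each such pair of disjoint strings is itself a pair of proper substrings witnessing disjointness, this matches the relaxed Petty condition adopted earlier for antipodal regions, namely that the regions contain proper substrings lying in disjoint hyperplanes. Consequently a pair of antipodal worldsheets is, a fortiori, a pair of antipodal regions in $2^{S^n}$.

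With these two observations in place, the conclusion follows by substitution. Replacing the generic region $A$ by $\sh M$ and $\righthalfcap A$ by $\righthalfcap \sh M$ in Theorem~\ref{thm:redSnBUT}, and invoking the Re.d.p.c.\ continuity of $f$ — which guarantees $A\ \dnear\ B\Rightarrow f(A)\ \dnear\ f(B)$, so that the feature vector shared by descriptively near antipodal worldsheets is forced to be a single point of $\mathbb{R}^n$ — we obtain $f(\sh M)=f(\righthalfcap \sh M)$ for the antipodal worldsheet $\righthalfcap \sh M$.

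The step I expect to carry the real content is the second one: confirming that \emph{antipodal worldsheets} genuinely instantiate \emph{antipodal regions}. The worldsheet definition asks only for a single pair of disjoint strings, and one might worry that the region-based theorem demands a stronger relationship between the full sheets. The resolution is that Theorem~\ref{thm:redSnBUT} turns on shared descriptions (strong descriptive nearness via $\snd$ and axiom (dsnP6)) rather than on geometric disjointness; the existence of a common description $f(\sh M)\in\mathbb{R}^n$ for the two antipodal worldsheets is precisely what the hypothesis supplies, and this is where the bulk of the justification should be spent before the final swap is invoked.
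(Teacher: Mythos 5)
Your proposal is correct and follows essentially the same route as the paper: the paper proves Theorem~\ref{cor:strBUTsheet} by declaring it symmetric with the proof of Theorem~\ref{cor:strBUTsimplest}, i.e., treating each worldsheet as a spatial subregion of $2^{S^n}$ and swapping it for the generic antipodal regions in Theorem~\ref{thm:redSnBUT}, which is exactly your substitution argument. Your added verifications --- that a worldsheet is literally a member of $2^{S^n}$ and that antipodal worldsheets instantiate antipodal regions --- simply make explicit what the paper leaves implicit in its ``swap out'' step.
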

\begin{proof}
The proof is symmetric with the proof of Theorem~\ref{cor:strBUTsimplest}.
\end{proof} 

\begin{remark}
Theorem~\ref{cor:strBUTsimplest} and Theorem~\ref{cor:strBUTsheet} are the simplest forms of string-based Borsuk-Ulam Theorem (strBUT).  In this section, we also consider other forms of strBUT that arise naturally from strong forms of descriptive proximity and which have proved to be useful in a number of applications.
\qquad \textcolor{blue}{$\blacksquare$}
\end{remark}

\begin{definition}{\bf Region-Based $\sn$-Continuous Mapping}~\cite[\S 5.7]{Peters2016ISRLcomputationalProximity}.\\
Let $X,Y$ be nonempty sets.  Suppose that $(2^X, \tau_{2^X}, {\sn}_{2^X}) $ and $(Y, \tau_{Y}, {\sn}_{Y})$ are topological spaces endowed with strong proximities.  We say that $f: 2^X \rightarrow Y$ is \emph{region strongly proximal continuous} and we write \emph{\bf Re.s.p.c.} if and only if, for $A, B \in 2^X$, 
\[
\ A\ {\sn}_X\ B \Rightarrow f(A)\ {\sn}_Y\ f(B). \mbox{\qquad \textcolor{blue}{$\blacksquare$}}
\] 
\end{definition} 

For an introduction to s.p.c. mappings, see~\cite{PetersGuadagni2016spcMappings}.  Let $A\in 2^{S^n},\righthalfcap A\in 2^{S^n}\setminus A$.  For a Re.s.p.c. mapping $f: 2^{S^n} \rightarrow \mathbb{R}^n$ on the collection of subsets $2^{S^n}$ into $\mathbb{R}^n$, the assumption is that $2^{S^n}$ is a region-based object space (each object is represented by a nonempty region) and $\mathbb{R}^n$ is a feature space (each region $A$ in $2^{S^n}$ maps to a feature vector $y$ in an $n$-dimensional Euclidean space $\mathbb{R}^n$ such that $y\in \mathbb{R}^n$ is a description of region $A$ that matches the description of $\righthalfcap A$). 

\begin{definition}{\bf Region-Based $\snd$-Continuous Mapping}.\\
The mapping $f: 2^X \rightarrow \mathbb{R}^n$ is \emph{region $\snd$-continuous} and we write \emph{\bf Re.d.s.p.c.} if and only if, for $A, B \in 2^X$, 
\[
\ A\ \snd\ B \Rightarrow f(A)\ \snd\ f(B),
\]
where $f(A)\in \mathbb{R}^n$ is a feature vector that describes region $A$.
\mbox{\qquad \textcolor{blue}{$\blacksquare$}}
\end{definition}

\begin{lemma}\label{lemma:reBorsuk} {\rm \cite{PetersTozzi2016reBUT}}.\\
Suppose that $(2^{\mathbb{R}^n}, \tau_{2^{\mathbb{R}^n}}, \snd) $ and $(\mathbb{R}^n, \tau_{\mathbb{R}^n}, \snd)$ are topological spaces endowed with compatible strong descriptive proximities and $f$ is a $\snd$ continuous mapping on the family of regions $2^{\mathbb{R}^n}$ into $\mathbb{R}^n$.
If $f(A)\in \mathbb{R}^n$ is a description common to antipodal regions $A,\righthalfcap A\in 2^{\mathbb{R}^n}\setminus A$, then $f(A) = f(\righthalfcap A)$ for some $\righthalfcap A$.
\end{lemma}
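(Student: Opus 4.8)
The plan is to mirror the proof of Theorem~\ref{thm:Borsuk} and of its descriptive sibling Lemma~\ref{lemma:redBorsuk}, replacing the descriptive proximity $\dnear$ throughout by its strong counterpart $\snd$ and invoking the strong descriptive separation axiom (dsnP6) at the decisive step. First I would unpack the hypothesis that $f$ is Re.d.s.p.c., which by the Region-Based $\snd$-Continuous Mapping definition means that for all $A,B\in 2^{\mathbb{R}^n}$ one has $A\ \snd\ B \Rightarrow f(A)\ \snd\ f(B)$. The entire argument then reduces to manufacturing the premise $A\ \snd\ \righthalfcap A$ and reading off its consequence. Note that antipodality of $A$ and $\righthalfcap A$ serves only to name the partner region and record that the two are spatially separated; it is the shared description, not the geometry, that couples them descriptively.

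To manufacture that premise I would use the standing assumption that $f(A)$ is a description common to the antipodal regions $A$ and $\righthalfcap A$. A common description means $f(A)\in\Phi(A)$ and $f(A)\in\Phi(\righthalfcap A)$, so the descriptive intersection $A\ \dcap\ \righthalfcap A$ is nonempty. Appealing to axiom (dsnP4) in its interior form $\Int A\ \dcap\ \Int(\righthalfcap A)\neq\emptyset$, this nonempty descriptive overlap upgrades to strong descriptive nearness, yielding $A\ \snd\ \righthalfcap A$. This is the step I expect to be the main obstacle: the bare hypothesis only guarantees a shared feature vector, whereas promoting a nonempty descriptive intersection to the \emph{strong} relation $\snd$ requires the matching description to be witnessed on the interiors of the two regions. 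One must therefore either assume, or argue from the compatibility of the two strong descriptive proximities, that the common value $f(A)$ is carried by interior points of both $A$ and $\righthalfcap A$; without this the passage from $\dcap$ to $\snd$ is not licensed by the axioms.

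Once $A\ \snd\ \righthalfcap A$ is in hand, Re.d.s.p.c. continuity of $f$ delivers $f(A)\ \snd\ f(\righthalfcap A)$. Finally, since $f$ takes values in $\mathbb{R}^n$, both $f(A)$ and $f(\righthalfcap A)$ are single feature vectors, so I would regard them as the singletons $\{f(A)\}$ and $\{f(\righthalfcap A)\}$ and invoke axiom (dsnP6): $\{f(A)\}\ \snd\ \{f(\righthalfcap A)\} \Leftrightarrow \Phi(f(A))=\Phi(f(\righthalfcap A))$. Because a point of $\mathbb{R}^n$ is its own description, matching descriptions force $f(A)=f(\righthalfcap A)$, which is the claim. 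The routine checks — that the compatible strong descriptive proximities make the singleton reduction legitimate, and that (dsnP2) keeps $A\ \dcap\ \righthalfcap A\neq\emptyset$ throughout — I would relegate to remarks, since they parallel the corresponding verifications already carried out for Lemma~\ref{lemma:redBorsuk}.
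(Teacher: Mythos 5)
Your proposal stalls exactly where you predict it will, and that stall is fatal. The axioms the paper provides give you only (dsnP2), which runs in the wrong direction ($A\ \snd\ B \Rightarrow A\ \dcap\ B \neq \emptyset$), and (dsnP4), which requires the descriptive overlap to be witnessed on \emph{interiors}: $\mbox{int}A\ \dcap\ \mbox{int}(\righthalfcap A) \neq \emptyset$. The lemma's hypothesis hands you only a shared region-level description $f(A)$, from which at best you get $A\ \dcap\ \righthalfcap A \neq \emptyset$; nothing in the hypotheses, and nothing in ``compatibility'' of the two proximities, lets you push the common feature value into the interiors of two regions that are, by antipodality, allowed to be spatially disjoint. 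Since you cannot legitimately assert $A\ \snd\ \righthalfcap A$, the Re.d.s.p.c.\ property of $f$ never fires, and the rest of your chain (continuity, then (dsnP6)) never gets started. There is also a secondary soft spot at the end: even granting $f(A)\ \snd\ f(\righthalfcap A)$, axiom (dsnP6) yields only $\Phi(f(A)) = \Phi(f(\righthalfcap A))$, matching \emph{descriptions} of the two feature vectors, and you need the unstated convention that $\Phi$ acts as the identity on the feature space $\mathbb{R}^n$ to conclude $f(A) = f(\righthalfcap A)$.

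The deeper problem is that the route itself inverts the intended logic. The paper gives no proof of Lemma~\ref{lemma:reBorsuk} (it is quoted from \cite{PetersTozzi2016reBUT}), but the intended argument---visible in the paper's proof of the parallel Lemma~\ref{lemma:rexBorsuk}---is direct and does not use the continuity of $f$ at all: in this framework $f$ \emph{is} the description map, so each region has exactly one description, namely its $f$-value. The hypothesis that $f(A)$ is a description common to $A$ and to some antipodal $\righthalfcap A$ therefore already asserts that the description of $\righthalfcap A$, which is $f(\righthalfcap A)$, coincides with $f(A)$; the conclusion ``$f(A) = f(\righthalfcap A)$ for some $\righthalfcap A$'' follows by reading off definitions, with antipodality serving only to name the partner region. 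Your attempt to manufacture the shared description as a \emph{consequence} of strong descriptive nearness treats the hypothesis as something to be proved, and in doing so forces you through exactly the axiom gap you identified but could not close.
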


\setlength{\intextsep}{0pt}
\begin{wrapfigure}[12]{R}{0.35\textwidth}
\begin{minipage}{3.2 cm}
\centering
\includegraphics[width=27mm]{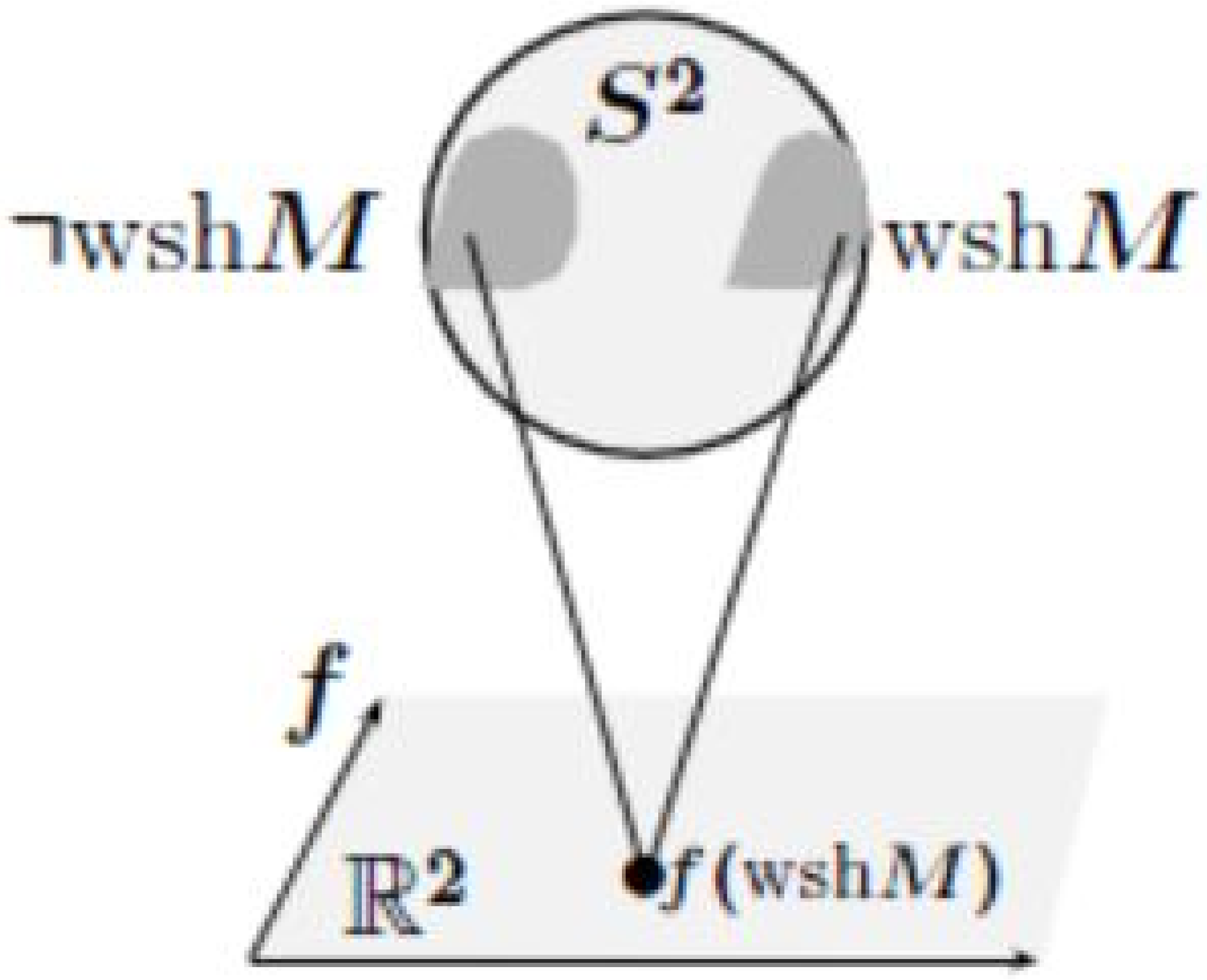}
\caption[]{\footnotesize $\boldsymbol{2^{S^2}\mapsto \mathbb{R}^2}$}
\label{fig:re2BUT}
\end{minipage}
\end{wrapfigure}
$\mbox{}$\\
\vspace{3mm}

Lemma~\ref{lemma:reBorsuk} is restricted to regions in $(2^{\mathbb{R}^n}$ described by feature vectors in an $n$-dimensional Euclidean space $\mathbb{R}^n$.  Next, consider regions on the surface of an $n$-sphere $S^n$.  Each feature vector $f(A)$ in $\mathbb{R}^n$ describes a region $A\in 2^{S^n}$.  Then we obtain the following result.

\begin{theorem}\label{thm:reSnBUT} {\rm \cite{PetersTozzi2016reBUT}}.\\
Suppose that $(2^{S^n}, \tau_{2^{S^n}}, \snd)$ and $(\mathbb{R}^n, \tau_{\mathbb{R}^n}, \snd)$ are topological spaces endowed with compatible strong proximities.  Let $A\in 2^{S^n}$, a region in the family of regions in $2^{S^n}$.
If $f:2^{S^n}\longrightarrow \mathbb{R}^n$ is $\sn$-Re.s.p continuous, then $f(A) = f(\righthalfcap A)$ for region $\righthalfcap A\in 2^{S^n}$.
\end{theorem}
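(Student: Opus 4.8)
The plan is to reduce Theorem~\ref{thm:reSnBUT} to Lemma~\ref{lemma:reBorsuk} by transporting the statement from the sphere-based object space $2^{S^n}$ to the Euclidean feature space $\mathbb{R}^n$ through the description map $f$. Lemma~\ref{lemma:reBorsuk} already delivers the conclusion for antipodal regions lying in $2^{\mathbb{R}^n}$, so the only work is to set up a description-preserving correspondence between regions on the surface of $S^n$ and their feature vectors in $\mathbb{R}^n$; once that is in place, the proof is the same ``swap out'' substitution used for Theorem~\ref{cor:strBUTsimplest}.

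First I would fix, for the given $A$, an antipodal region $\righthalfcap A\in 2^{S^n}$ in the relaxed Petty sense of Section~2, with points $p\in A$, $q\in \righthalfcap A$ lying in disjoint parallel hyperplanes. Although such a pair can be spatially far apart, it is descriptively strongly near---$A\ \snd\ \righthalfcap A$---exactly when the two regions carry a common description, which is the feature-space premise underlying every Borsuk-Ulam incarnation in this paper. Applying the defining implication of a Re.s.p.c.\ (equivalently, region $\snd$-continuous) map to this pair then yields $f(A)\ \snd\ f(\righthalfcap A)$ in the image space.

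The decisive step is to read this off through Lemma~\ref{lemma:reBorsuk} with $2^{\mathbb{R}^n}$ replaced by $2^{S^n}$: the two images, regarded as the singletons $\{f(A)\}$ and $\{f(\righthalfcap A)\}$ in $\mathbb{R}^n$, are strongly descriptively near, so axiom (dsnP6), $\{x\}\ \snd\ \{y\}\Leftrightarrow \Phi(x)=\Phi(y)$, forces their descriptions to agree, i.e.\ $f(A)=f(\righthalfcap A)$. As in the point-based Theorem~\ref{thm:Borsuk}, the geometry of $S^n$ enters only to furnish the antipodal region; the equality itself is driven entirely by the proximity axioms and the continuity of $f$.

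I expect the main obstacle to be justifying that the \emph{compatibility} of the strong descriptive proximities on $2^{S^n}$ and on $\mathbb{R}^n$ is what actually licenses the transport---that the description map neither fuses distinct proximity classes nor tears apart strongly near regions, so that a strongly near antipodal pair on $S^n$ is guaranteed a strongly near (hence, by (dsnP6), equal) pair of descriptions in $\mathbb{R}^n$. Once compatibility is made precise, the substitution into Lemma~\ref{lemma:reBorsuk} is routine and the theorem follows.
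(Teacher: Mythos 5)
Your reduction to Lemma~\ref{lemma:reBorsuk} is essentially the paper's own route: Theorem~\ref{thm:reSnBUT} is imported from \cite{PetersTozzi2016reBUT} with no in-paper proof beyond the preceding paragraph, which obtains it by reading Lemma~\ref{lemma:reBorsuk} with $2^{\mathbb{R}^n}$ swapped for $2^{S^n}$, each region $A\in 2^{S^n}$ being described by its feature vector $f(A)\in\mathbb{R}^n$ --- precisely your transport-and-substitute argument. The additional axiom-level detail you supply (the Re.s.p.c.\ implication followed by the singleton axiom (dsnP6), and the worry about compatibility) goes beyond what the paper records but matches the proof pattern it uses for the neighboring results (Theorem~\ref{thm:Borsuk} and Theorem~\ref{cor:strBUTsimplest}), so the two approaches coincide.
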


\begin{theorem}\label{thm:reSnBUTworldsheet}
Suppose that $(2^{S^n}, \tau_{2^{S^n}}, \snd)$ and $(\mathbb{R}^n, \tau_{\mathbb{R}^n}, \snd)$ are topological spaces endowed with compatible strong proximities.  Let $\sh A\in 2^{S^n}$ be a worldsheet in the family of regions in $2^{S^n}$.
If $f:2^{S^n}\longrightarrow \mathbb{R}^n$ is $\sn$-Re.s.p continuous, then $f(\sh A) = f(\righthalfcap \sh A)$ for region $\righthalfcap \sh A\in 2^{S^n}$.
\end{theorem}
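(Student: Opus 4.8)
The plan is to obtain this result as a direct specialization of Theorem~\ref{thm:reSnBUT}, exactly as Theorem~\ref{cor:strBUTsimplest} and Theorem~\ref{cor:strBUTsheet} were obtained from Theorem~\ref{thm:redSnBUT}. The governing observation is that a worldsheet is a region: by definition a region $\sh A$ is a worldsheet precisely when every subregion of $\sh A$ contains at least one string, so in particular $\sh A$ is a nonempty member of $2^{S^n}$ and therefore lies in the family of regions to which Theorem~\ref{thm:reSnBUT} applies. Hence the content of the statement reduces to verifying that the hypotheses of Theorem~\ref{thm:reSnBUT} hold when the region $A$ is taken to be the worldsheet $\sh A$ and its antipode is the worldsheet $\righthalfcap \sh A$.

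First I would carry over the proximity machinery. Since the strong proximity $\snd$ lives on all of $2^{S^n}$, it restricts to the subfamily of worldsheets, and $f$ is $\sn$-Re.s.p.c.\ on the whole of $2^{S^n}$; thus whenever $\sh A\ \snd\ \righthalfcap \sh A$ holds, $f(\sh A)\ \snd\ f(\righthalfcap \sh A)$ holds in $\mathbb{R}^n$. Next I would force the hypothesis $\sh A\ \snd\ \righthalfcap \sh A$ from the assumption that the feature vector $f(\sh A)\in \mathbb{R}^n$ is a description common to $\sh A$ and $\righthalfcap \sh A$, i.e.\ the two worldsheets are descriptively strongly near. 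Finally, because $f(\sh A)$ and $f(\righthalfcap \sh A)$ are feature vectors (points) in the range, the singleton axiom (snN6) (equivalently (dsnP6)) applied to $f(\sh A)\ \snd\ f(\righthalfcap \sh A)$ yields $f(\sh A) = f(\righthalfcap \sh A)$, which is the desired conclusion.

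The step I expect to be the main obstacle is reconciling the worldsheet notion of antipodality with the region notion used upstream in Theorem~\ref{thm:reSnBUT}. Worldsheets $\sh A,\righthalfcap \sh A$ are declared antipodal as soon as they contain a single pair of disjoint strings with $\str A\cap \str B = \emptyset$, whereas the region-level (Petty-style) definition asks for points $p\in \sh A, q\in \righthalfcap \sh A$ lying in disjoint parallel hyperplanes of $S^n$. I would therefore need to confirm that the guaranteed disjoint string pair actually supplies such a separating hyperplane pair, rather than merely disjoint substrings. Once this compatibility is in hand, the proof closes by the same swap argument used for Theorem~\ref{cor:strBUTsimplest}: replace $A,\righthalfcap A\in 2^{S^n}$ by $\sh A,\righthalfcap \sh A\in 2^{S^n}$ in Theorem~\ref{thm:reSnBUT} and the result follows.
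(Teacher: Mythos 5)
Your core argument---that a worldsheet $\sh A$ is by definition a region in $2^{S^n}$, so the conclusion follows by specializing Theorem~\ref{thm:reSnBUT} with $A := \sh A$---is exactly the paper's proof, which is a one-line reduction on precisely these grounds. The extra steps you anticipate (re-running the $\snd$ and (dsnP6) argument in the range, and reconciling worldsheet antipodality with the hyperplane notion) are unnecessary: the statement only asserts $f(\sh A) = f(\righthalfcap \sh A)$ for some \emph{region} $\righthalfcap \sh A\in 2^{S^n}$, which is exactly what Theorem~\ref{thm:reSnBUT} already delivers, so no compatibility check between the two notions of antipodality is required.
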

\begin{proof}
Since a worldsheet $\sh A\in 2^{S^n}$ is a region in $2^{S^n}$, the result follows from Theorem~\ref{thm:reSnBUT}.
\end{proof}

\begin{example}
The $\sn$-Re.s.p continuous mapping $f:2^{S^2}\longrightarrow \mathbb{R}^2$ is represented in Fig.~\ref{fig:re2BUT}.  In this example, it is assumed that the worldsheets $\sh M,\righthalfcap \sh M$ have matching descriptions one feature, {\em e.g.}, area.  In that case, $f(\sh M) = f(\righthalfcap \sh M)$.
\qquad \textcolor{blue}{$\blacksquare$} 
\end{example}

In the proof of Theorem~\ref{thm:reSnBUT} and Theorem~\ref{thm:reSnBUTworldsheet}, we do not depend on the fact that each region is on the surface of a hypersphere $S^n$.  For this reason, we are at liberty to introduce a more general region-based Borsuk-Ulam Theorem (denoted by reBUT), applicable to strings and worldsheets.

Let $A\in 2^X,\righthalfcap A\in 2^X\setminus A$.  For a Re.s.p.c. mapping $f: 2^X \rightarrow \mathbb{R}^n$ on the collection of subsets $2^X$ to $\mathbb{R}^n$, the assumption is that $2^X$ is a region-based object space (each object is represented by a nonempty region) and $\mathbb{R}^n$ is a feature space (each region $A$ in $2^X$ maps to a feature vector $y$ in $\mathbb{R}^n$ such that $y$ is a description of region $A$).  Then we obtain the following result

\begin{theorem}\label{thm:reRnBUT} {\rm \cite{PetersTozzi2016reBUT}}.\\
Suppose that $(X, \tau_X, {\sn}_X) $ and $(\mathbb{R}^n, \tau_{\mathbb{R}^n}, {\sn}_{\mathbb{R}^n})$ are topological spaces endowed with compatible strong proximities.
If $f:2^X\longrightarrow \mathbb{R}^n$ is $\sn$-Re.s.p continuous, then $f(A) = f(\righthalfcap A)$ for some $A\in 2^X$.
\end{theorem}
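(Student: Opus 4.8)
The plan is to exploit the observation made in the sentence immediately preceding the statement: the spherical geometry of $S^n$ plays no role in the proofs of Theorem~\ref{thm:reSnBUT} and Theorem~\ref{thm:reSnBUTworldsheet}. Those arguments invoke only three ingredients, namely that the domain carries a strong proximity ${\sn}_X$ compatible with the strong proximity ${\sn}_{\mathbb{R}^n}$ on the feature space, that $f$ is $\sn$-Re.s.p.\ continuous (so $A\ {\sn}_X\ B$ forces $f(A)\ {\sn}_{\mathbb{R}^n}\ f(B)$), and that $\mathbb{R}^n$ is a feature space in which each region is sent to its describing feature vector. Since none of these refers to $S^n$, the first step is simply to copy the proof of Theorem~\ref{thm:reSnBUT} verbatim and replace every occurrence of $S^n$ and $2^{S^n}$ by $X$ and $2^X$.

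Concretely, I would proceed as in the proof of Theorem~\ref{thm:Borsuk}, but at the level of regions rather than points. Given a region $A\in 2^X$ and its antipode $\righthalfcap A\in 2^X\setminus A$, the standing assumption that $\mathbb{R}^n$ is a feature space means that $f(A)$ and $f(\righthalfcap A)$ are the feature vectors describing $A$ and $\righthalfcap A$. I would then invoke the separation axiom on the image side---the analogue of (snN6) in $\mathbb{R}^n$---to conclude that once $f(A)$ and $f(\righthalfcap A)$ are strongly near as singletons, they must coincide, giving $f(A) = f(\righthalfcap A)$. The Re.s.p.c.\ hypothesis is what delivers this strong nearness of the images, while compatibility of the two strong proximities is what guarantees that the implication transfers cleanly from domain to codomain.

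The hard part will be reconciling the word \emph{antipodal} with the strong proximity machinery. By definition, antipodal regions share no points (or only substrings lying in disjoint hyperplanes), yet axiom (snN2) forces $A\ {\sn}_X\ B$ to imply $A\cap B\neq\emptyset$; so $A$ and $\righthalfcap A$ are precisely \emph{not} spatially strongly near. The conclusion $f(A)=f(\righthalfcap A)$ therefore cannot issue from spatial nearness of $A$ and $\righthalfcap A$ in the domain---it must be read \emph{descriptively}, through matching feature vectors, exactly as in Example~\ref{ex:antipodalStrings}, where spatially far strings are nonetheless descriptively near. The delicate step is thus to make explicit that the relation driving the conclusion is the descriptive strong proximity $\snd$, under which disjoint regions may still be near when their descriptions agree, and to verify that passing from $S^n$ to an arbitrary $X$ leaves this descriptive bookkeeping untouched. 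Once that point is pinned down, the result follows immediately from Theorem~\ref{thm:reSnBUT}, since every region of $2^X$ is handled by the same argument that handled regions of $2^{S^n}$.
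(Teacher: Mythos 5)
Your proposal is correct and is essentially the paper's own approach: the paper gives no written proof of this theorem (it is quoted from an earlier paper of the authors), and the only justification offered is the remark immediately preceding the statement that the proofs of Theorem~\ref{thm:reSnBUT} and Theorem~\ref{thm:reSnBUTworldsheet} nowhere depend on the geometry of $S^n$ --- exactly the substitution of $X$ for $S^n$ with which you begin. Your additional care in reading the conclusion descriptively (via $\snd$ and matching feature vectors, since antipodal regions are disjoint and hence, by (snN2), never spatially strongly near) is consistent with how the paper argues its neighboring results, such as Lemma~\ref{lemma:reBorsuk}, where $f(A)$ is assumed to be a description common to both antipodes.
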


\begin{theorem}\label{thm:reRnBUTsheet} {\bf $\boldsymbol{\strBUT}$ for Worldsheets}.\\
Suppose that $(X, \tau_X, {\sn}_X) $ and $(\mathbb{R}^n, \tau_{\mathbb{R}^n}, {\sn}_{\mathbb{R}^n})$ are topological spaces endowed with compatible strong proximities, $\sh M, \righthalfcap \sh M\subset X$.
If $f:2^X\longrightarrow \mathbb{R}^n$ is $\sn$-Re.s.p continuous, then $f(\sh M) = f(\righthalfcap \sh M)$ for some $\sheet M\in 2^X$.
\end{theorem}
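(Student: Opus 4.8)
The plan is to obtain the statement as a direct specialization of Theorem~\ref{thm:reRnBUT}, exactly as Theorem~\ref{thm:reSnBUTworldsheet} was derived from Theorem~\ref{thm:reSnBUT}. The conclusion of Theorem~\ref{thm:reRnBUT} is phrased for an arbitrary region $A\in 2^X$ together with its antipode $\righthalfcap A$, so the task reduces to recognizing a worldsheet as a legitimate instance of such a region.

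First I would invoke the definition of a worldsheet: $\sh M$ is a nonempty region of $X$, namely a region every subregion of which contains at least one string. In particular $\sh M$ is a member of the family $2^X$, and the same is true of any antipodal worldsheet $\righthalfcap \sh M\in 2^X\setminus \sh M$. This places both $\sh M$ and $\righthalfcap \sh M$ squarely inside the hypotheses of Theorem~\ref{thm:reRnBUT}, whose spaces $(X,\tau_X,{\sn}_X)$ and $(\mathbb{R}^n,\tau_{\mathbb{R}^n},{\sn}_{\mathbb{R}^n})$ and whose mapping $f:2^X\to\mathbb{R}^n$ coincide verbatim with those assumed here.

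Next I would substitute $A:=\sh M$ and $\righthalfcap A:=\righthalfcap \sh M$ into Theorem~\ref{thm:reRnBUT}. Since $f$ is assumed $\sn$-Re.s.p continuous and the two strong proximity structures are compatible, every hypothesis of that theorem is satisfied, and its conclusion immediately yields $f(\sh M)=f(\righthalfcap \sh M)$ for some worldsheet $\sheet M\in 2^X$.

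The only point meriting a moment of care --- the nearest thing to an obstacle --- is checking that the antipodal relation on worldsheets is compatible with the region-level antipodal relation underlying Theorem~\ref{thm:reRnBUT}. By the convention adopted earlier in the paper, $\sh M$ and $\righthalfcap \sh M$ are antipodal precisely when they contain a disjoint pair of strings $\str A\in \sh M$, $\str B\in \righthalfcap \sh M$ with $\str A\cap \str B=\emptyset$; this disjointness furnishes the separating substrings that let us treat $\sh M,\righthalfcap \sh M$ as antipodal regions, so no new hypothesis is smuggled in. With this compatibility noted, the result follows.
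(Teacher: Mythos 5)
Your proposal is correct and follows essentially the same route as the paper's own proof: both treat the worldsheet $\sh M$ and its antipode $\righthalfcap \sh M$ as regions in $2^X$ and substitute them for $A,\righthalfcap A$ in Theorem~\ref{thm:reRnBUT}, exactly as the paper does in one line. Your added remark on the compatibility of the worldsheet-level and region-level antipodal relations is a reasonable piece of extra care, but it does not change the substance of the argument.
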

\begin{proof}
Let each worldsheet be a spatial subregion of $X$.  Let $\sh M, \righthalfcap \sh M\subset X$.  Swap out $A,\righthalfcap A\in 2^{\mathbb{R}^n}$ with $\sh M, \righthalfcap \sh M$ in Theorem~\ref{thm:reRnBUT} and the result follows.
\end{proof} 

\begin{example}
Let $\sh M, \righthalfcap \sh M$ in Fig.~\ref{fig:re2BUT} represent a pair of antipodal worldsheets.  For simplicity, we consider only the feature worldsheet area.  Let $f:2^X\longrightarrow \mathbb{R}^n$ map a worldsheet $\sh M$ to a real number that is the area of $\sh M$, \emph{i.e.}, $f(\sh M)\in \mathbb{R}^2$ equals the area of $\sh M$.  It is clear that more than one antipodal worldsheet will have the same area.  Then, from Theorem~\ref{thm:reRnBUTsheet}, $f(\sh M) = f(\righthalfcap \sh M)$ for some $\righthalfcap \sh M\in 2^X$.
\qquad \textcolor{blue}{$\blacksquare$} 
\end{example}

\begin{lemma}\label{lemma:rexBorsuk} {\bf Region Descriptions in a $k$-Dimensional Space}.\\
Suppose that $(2^{S^n}, \tau_{2^{S^n}}, {\sn}_{2^{S^n}}) $ and $(\mathbb{R}^k, \tau_{\mathbb{R}^k}, {\sn}_{\mathbb{R}^k})$ are topological spaces endowed with compatible strong proximities and $f$ on the family of regions $2^{S^n}$ maps into $\mathbb{R}^k, k > 0$.
If $f(A)\in \mathbb{R}^k$ is a description common to antipodal regions $\righthalfcap A\in 2^{S^n}\setminus A$, then $f(A) = f(\righthalfcap A)$ in $\mathbb{R}^k$ for some $\righthalfcap A$ in $2^{S^n}$ for $f:2^{S^n}\longrightarrow \mathbb{R}^k$.
\end{lemma}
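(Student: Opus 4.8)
The plan is to exploit the fact that the conclusion is completely insensitive to the dimension of the target space, and thereby reduce the statement to the already-established Lemma~\ref{lemma:reBorsuk} and Theorem~\ref{thm:reSnBUT}. Neither of those proofs used the coincidence of the sphere dimension $n$ with the codomain dimension $n$: the only role the target plays is that it carries a strong descriptive proximity $\snd$ compatible with the one on the region space, and that its \emph{points} (feature vectors) are compared through the separation axiom (dsnP6). I would therefore transport the $\mathbb{R}^n$ argument verbatim, writing $\mathbb{R}^k$ in place of $\mathbb{R}^n$ throughout, so that $f:2^{S^n}\longrightarrow\mathbb{R}^k$ is treated exactly as the Re.d.s.p.c.\ mappings of the earlier results.

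First I would unwind the hypothesis. To say that $f(A)\in\mathbb{R}^k$ is a description common to the antipodal regions $A$ and $\righthalfcap A$ means, by the description conventions $(\boldsymbol{\Phi})$ and $(\boldsymbol{\dcap})$, that $A$ and $\righthalfcap A$ share the feature vector $f(A)$, i.e. $A\ \dcap\ \righthalfcap A\neq\emptyset$. Taking this common feature to be realised on the interiors, axiom (dsnP4) then gives $A\ \snd\ \righthalfcap A$. Next I would push this relation through the mapping: since $f$ is region strongly (descriptively) proximal continuous, strong nearness is preserved, so $A\ \snd\ \righthalfcap A$ yields
\[
f(A)\ \snd\ f(\righthalfcap A)\quad\text{in } \mathbb{R}^k.
\]
Because $f(A)$ and $f(\righthalfcap A)$ are single feature vectors, i.e. singletons in $\mathbb{R}^k$, the point-level separation axiom (dsnP6), namely $\{x\}\ \snd\ \{y\}\Leftrightarrow \Phi(x)=\Phi(y)$, forces $f(A)=f(\righthalfcap A)$. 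This is precisely the step that consumed only the proximity-and-separation structure of the target and never its dimension, so it closes equally well for $\mathbb{R}^k$ and for $\mathbb{R}^n$; invoking the $\mathbb{R}^k$-analogue of Theorem~\ref{thm:reSnBUT} completes the argument.

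The main obstacle is bookkeeping rather than genuine difficulty, and it has two parts. One must verify that the compatibility of the strong proximities on $2^{S^n}$ and on $\mathbb{R}^k$ is exactly what licenses the use of (dsnP6) in the target, so that nearness of images genuinely collapses to equality of descriptions; and one must read the phrase ``description common to antipodal regions'' as the \emph{premise} $A\ \snd\ \righthalfcap A$ (via $A\ \dcap\ \righthalfcap A\neq\emptyset$ and (dsnP4), or equivalently Proposition~\ref{prop:dnear} run against (dsnP2)) rather than as the conclusion itself, which would render the lemma circular. Once the hypothesis is correctly interpreted as strong descriptive nearness of $A$ and its antipode, the generalisation from $\mathbb{R}^n$ to arbitrary $\mathbb{R}^k$ with $k>0$ is immediate, since at no point does the chain of implications examine the number of coordinates in the feature vector.
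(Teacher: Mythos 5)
Your reduction runs through a step the lemma does not license: you pass from $A\ \snd\ \righthalfcap A$ to $f(A)\ \snd\ f(\righthalfcap A)$ ``since $f$ is region strongly (descriptively) proximal continuous.'' But compare the hypotheses of Lemma~\ref{lemma:rexBorsuk} with those of Lemma~\ref{lemma:reBorsuk}: the earlier lemma explicitly assumes $f$ is a $\snd$ continuous mapping, whereas this one assumes only that ``$f$ on the family of regions $2^{S^n}$ maps into $\mathbb{R}^k$'' --- no proximal continuity of any kind is granted. Your entire chain (descriptive intersection, (dsnP4), preservation under $f$, (dsnP6)) collapses without that middle step, so as written you have proved a statement with a strictly stronger hypothesis than the one asserted. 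Your dimension-insensitivity observation is correct --- nothing in the target's role depends on $k$ versus $n$ --- but that is not where the difficulty of this particular lemma sits.

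The paper's own proof takes an entirely different, continuity-free route, which is worth knowing: it treats the conclusion as purely existential (``for some $\righthalfcap A$'') and simply constructs a witness. It restricts to $k=1$, takes the regions to be disks tiling a finite, bounded, rectangular planar space, defines $f(A)$ to be in effect the number of disks adjacent to $A$ (value $2$ at corner regions, a larger value otherwise), and observes that two antipodal corner regions receive the same value, so $f(A)=f(\righthalfcap A)$ in $\mathbb{R}^1$. This uses no proximity axioms and no continuity at all --- the ``common description'' hypothesis does all the work --- though it is itself closer to a representative example than a complete argument, since it covers only $k=1$ and one special configuration. To salvage your approach, either state the Re.d.s.p.c.\ assumption explicitly (making your result the $\mathbb{R}^k$ analogue of Lemma~\ref{lemma:reBorsuk} and Theorem~\ref{thm:reSnBUT}, which is plausibly what the paper intends), or accept the ``direct'' reading you rejected as circular: $f(A)$ being a description common to $A$ and $\righthalfcap A$, together with $f(\righthalfcap A)$ being \emph{the} description of $\righthalfcap A$, already forces $f(A)=f(\righthalfcap A)$ once each region's description is unique --- and that is exactly the reading under which the paper's witness construction counts as a proof.
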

\begin{proof}
Let $k > 0$.  The assumption is that $f(A)$ is a feature vector in a $k$-dimensional feature space that describes $A\in 2^{S^n}$ as well as at least one other region $\righthalfcap A\in 2^{S^n}\setminus A$ in $2^{S^n}$.  We consider only the case for $k = 1$ for shape-connected regions that are disks in a finite, bounded, rectangular shaped space in the Euclidean plane, where every region has 4,3 or 2 adjacent disks, {\em e.g.}, each corner region has at most 2 adjacent disks.
Let
\[
f(A) =
 \begin{cases}
 2, &\text{if $A$ is a corner region with 2 adjacent polygons},\\
 level > 2, &\text{otherwise}.
 \end{cases}
\]
Let $A,\righthalfcap A$ be antipodal corner regions.  Then $f(A) = f(\righthalfcap A)$ in $\mathbb{R}^1$. 
\end{proof}

Lemma~\ref{lemma:rexBorsuk} leads to a version of reBUT for region descriptions in a $k$-dimensional feature space.

\begin{theorem}\label{thm:rexBUT} {\bf Proximal Region-Based Borsuk-Ulam Theorem (rexBUT)}.\\
Suppose that $(X, \tau_X, {\sn}_X) $ and $(\mathbb{R}^k, \tau_{\mathbb{R}^k}, {\sn}_{\mathbb{R}^k}), k > 0$ are topological spaces endowed with compatible strong proximities.
If $f:2^X\longrightarrow \mathbb{R}^k$ is $\sn$-Re.s.p continuous, then $f(A) = f(\righthalfcap A)$ for some $A\in 2^X$.
\end{theorem}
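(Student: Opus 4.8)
The plan is to reduce this statement to the two generalizations already in hand, following the substitution template used for the earlier region-based results. Theorem~\ref{thm:reRnBUT} supplies the conclusion for an arbitrary domain $2^X$ mapping into $\mathbb{R}^n$, while Lemma~\ref{lemma:rexBorsuk} supplies the codomain generalization, replacing $\mathbb{R}^n$ by an arbitrary $\mathbb{R}^k$, $k>0$. The crucial observation is that neither argument exploits a matching of dimensions: the proof of Theorem~\ref{thm:reRnBUT} never uses that the target dimension is tied to the geometry of $S^n$, and the construction in Lemma~\ref{lemma:rexBorsuk} never uses that the source is specifically $2^{S^n}$ rather than a general $2^X$. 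Hence both substitutions may be performed at once.

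Concretely, I would first record that $f:2^X\longrightarrow\mathbb{R}^k$ is Re.s.p.c., so that for $A,B\in 2^X$ we have $A\ {\sn}_X\ B \Rightarrow f(A)\ {\sn}_{\mathbb{R}^k}\ f(B)$. Next I would fix a region $A\in 2^X$ together with an antipode $\righthalfcap A\in 2^X\setminus A$ whose description $f(A)\in\mathbb{R}^k$ is common to both, which is exactly the hypothesis isolated in Lemma~\ref{lemma:rexBorsuk}. Applying Re.s.p.c. continuity to this pair transports the near-relation into the feature space, giving $f(A)\ {\sn}_{\mathbb{R}^k}\ f(\righthalfcap A)$. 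Finally I would invoke the singleton axiom for strong proximity in its feature-space form (dsnP6), namely $\{x\}\ \snd\ \{y\}\Leftrightarrow\Phi(x)=\Phi(y)$, applied to the feature vectors $f(A)$ and $f(\righthalfcap A)$, to conclude $f(A)=f(\righthalfcap A)$. The theorem then follows by swapping $\mathbb{R}^n$ for $\mathbb{R}^k$ in Theorem~\ref{thm:reRnBUT}, equivalently by swapping $2^{S^n}$ for $2^X$ in Lemma~\ref{lemma:rexBorsuk}.

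The step I expect to be the main obstacle is reconciling \emph{antipodal} with \emph{strongly near}. Antipodal regions are by construction spatially disjoint, yet axiom (snN2) forces $A\ \sn\ B$ to entail $A\cap B\neq\emptyset$, so the purely spatial relation ${\sn}_X$ cannot by itself relate two antipodes. The resolution is that the operative near-relation here is the descriptive strong proximity $\snd$, under which disjoint regions sharing a common feature are near. One must therefore be careful to read the Re.s.p.c. hypothesis descriptively, so that the matching-description hypothesis of Lemma~\ref{lemma:rexBorsuk} is precisely what activates $f(A)\ \snd\ f(\righthalfcap A)$, and so that it is the descriptive singleton axiom (dsnP6), rather than the spatial (snN6), that delivers equality of the feature vectors in $\mathbb{R}^k$. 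Once this descriptive reading is fixed, the substitution argument carries over verbatim and the dimension $k$ plays no further role.
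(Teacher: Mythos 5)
Your proposal is correct and takes essentially the same route as the paper: the paper's own proof is precisely the substitution you describe --- swap $f:2^X\longrightarrow \mathbb{R}^n$ for $f:2^X\longrightarrow \mathbb{R}^k$, $k>0$, in Theorem~\ref{thm:reRnBUT}, with Lemma~\ref{lemma:rexBorsuk} cited as the lead-in for the $k$-dimensional feature space. Your additional unwinding of the mechanics (Re.s.p.c.\ continuity plus the descriptive singleton axiom, and the observation that the operative relation on antipodal regions must be the descriptive proximity $\snd$ rather than the spatial $\sn$) fills in details the paper leaves implicit, but it is elaboration of the same argument rather than a different one.
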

\begin{proof}
Swap out $f:2^X\longrightarrow \mathbb{R}^n$ with $f:2^X\longrightarrow \mathbb{R}^k, k > 0$ in the proof of Theorem~\ref{thm:reRnBUT} and the result follows.
\end{proof}

A \emph{string space} is a nonempty set of strings.  A \emph{worldsheet space} is a nonempty set of worldsheets.

\begin{corollary}
Let $X$ be a string space.  Assume $(X, \tau_X, {\sn}_X)$, $(\mathbb{R}^k, \tau_{\mathbb{R}^k}, {\sn}_{\mathbb{R}^k})$, $k > 0$ are topological string spaces endowed with compatible strong proximities.
If $f:2^X\longrightarrow \mathbb{R}^k$ is $\sn$-Re.s.p continuous, then $f(\str A) = f(\righthalfcap \str A)$ for some  string $\str A\in 2^X$.
\end{corollary}

\begin{corollary}
Let $X$ be a worldsheet space.  Assume $(X, \tau_X, {\sn}_X) $, $(\mathbb{R}^k, \tau_{\mathbb{R}^k}, {\sn}_{\mathbb{R}^k})$, $k > 0$ are topological worldsheet spaces endowed with compatible strong proximities.
If $f:2^X\longrightarrow \mathbb{R}^k$ is $\sn$-Re.s.p continuous, then $f(\sh A) = f(\righthalfcap \sh A)$ for some worldsheet $\sh A\in 2^X$.
\end{corollary}

\subsection{String Space and Worldsheet Space}
A \emph{string space} is a nonempty set of strings.  A \emph{worldsheet space} is a nonempty set of worldsheets.

\begin{corollary}
Let $X$ be a string space.  Assume $(X, \tau_X, {\sn}_X)$, $(\mathbb{R}^k, \tau_{\mathbb{R}^k}, {\sn}_{\mathbb{R}^k})$, $k > 0$ are topological string spaces endowed with compatible strong proximities.
If $f:2^X\longrightarrow \mathbb{R}^k$ is $\sn$-Re.s.p continuous, then $f(\str A) = f(\righthalfcap \str A)$ for some  string $\str A\in 2^X$.
\end{corollary}

\begin{corollary}
Let $X$ be a worldsheet space.  Assume $(X, \tau_X, {\sn}_X) $, $(\mathbb{R}^k, \tau_{\mathbb{R}^k}, {\sn}_{\mathbb{R}^k})$, $k > 0$ are topological worldsheet spaces endowed with compatible strong proximities.
If $f:2^X\longrightarrow \mathbb{R}^k$ is $\sn$-Re.s.p continuous, then $f(\sh A) = f(\righthalfcap \sh A)$ for some worldsheet $\sh A\in 2^X$.
\end{corollary}

Let $A$ be a region in the family of sets $2^X$, $\Phi(A)$ a feature vector with $k$ components that describes region $A$.  A straightforward extension of Theorem~\ref{thm:rexBUT} leads to a continuous mapping of antipodal regions in an $n$-dimensional space in $X$ to regions in a $(k)$-dimensional feature space $\mathbb{R}^{k}$. 
\begin{theorem}\label{thm:re2reBUT} {\bf Region-2-Region Based Borsuk-Ulam Theorem (re2reBUT)}.\\
Suppose that $(X, \tau_X, {\sn}_X)$, where space $X$ is $n$-dimensional and $(\mathbb{R}^{k}, \tau_{\mathbb{R}^{k}}, {\sn}_{\mathbb{R}^{k}}), k > 0$ are topological spaces endowed with compatible strong proximities.
If $f:2^X\longrightarrow \mathbb{R}^{k}$ is $\sn$-Re.s.p continuous, then $f(A) = f(\righthalfcap A)$ for some $A\in 2^X$.
\end{theorem}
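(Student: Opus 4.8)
The plan is to derive Theorem~\ref{thm:re2reBUT} from the already-established Theorem~\ref{thm:rexBUT} by a swap-out argument, the point being that the only new ingredient in the present statement---the stipulation that $X$ be $n$-dimensional---places no extra demand on the proof. First I would observe that the hypotheses here coincide with those of Theorem~\ref{thm:rexBUT}: compatible strong proximities on $2^X$ and on $\mathbb{R}^k$ with $k>0$, together with an $\sn$-Re.s.p. continuous map $f\colon 2^X\to\mathbb{R}^k$. The conclusion to be proved, $f(A)=f(\righthalfcap A)$ for some $A\in 2^X$, is word-for-word that of Theorem~\ref{thm:rexBUT}. Hence the entire task reduces to checking that no step of the underlying argument invokes the dimension of the domain.

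Next I would trace the engine of that argument. Theorem~\ref{thm:rexBUT} is obtained by swapping $\mathbb{R}^n$ for $\mathbb{R}^k$ in Theorem~\ref{thm:reRnBUT}, whose mechanism is the defining Re.s.p.c. implication $A\ \sn_X\ B\Rightarrow f(A)\ \sn_{\mathbb{R}^k}\ f(B)$ combined with the singleton axiom (snN6), $\{x\}\ \sn\ \{y\}\Leftrightarrow x=y$. Applied to an antipodal pair $A,\righthalfcap A\in 2^X$ that are strongly near, the implication forces the images to be strongly near in $\mathbb{R}^k$, and (snN6) then collapses those images to a common feature vector, yielding $f(A)=f(\righthalfcap A)$. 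Every one of these steps uses only the abstract strong-proximity structure on $2^X$ and the existence of an antipodal pair; none of them refers to coordinates, a metric, or the dimension of $X$. I would therefore simply invoke Theorem~\ref{thm:rexBUT} with its generic domain replaced by the $n$-dimensional space $X$ of the present statement, so that the conclusion carries over verbatim.

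The main obstacle I anticipate is the bookkeeping step of confirming that endowing $X$ with an $n$-dimensional structure does not disturb the standing hypothesis of \emph{compatible} strong proximities and, in particular, that a strongly near antipodal companion $\righthalfcap A\in 2^X\setminus A$ remains available in the $n$-dimensional setting. Once that compatibility is secured, the remainder is a direct appeal to Theorem~\ref{thm:rexBUT}, and no further computation is required.
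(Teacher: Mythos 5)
Your proposal is correct and takes essentially the same route as the paper: the paper's own proof likewise reduces Theorem~\ref{thm:re2reBUT} to Lemma~\ref{lemma:rexBorsuk} and the proof of Theorem~\ref{thm:rexBUT} via a swap-out argument, the $n$-dimensionality stipulation on $X$ playing no role in either version. If anything, your direct invocation of Theorem~\ref{thm:rexBUT} (whose hypotheses and conclusion coincide verbatim with those here) is tidier than the paper's wording, which swaps in a map into $\mathbb{R}^{n+k}$ even though the statement concerns $\mathbb{R}^{k}$.
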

\begin{proof}
From Lemma~\ref{lemma:rexBorsuk} and swapping out $f:2^X\longrightarrow \mathbb{R}^n$ with $f:2^X\longrightarrow \mathbb{R}^{n+k}, k > 0$ in the proof of Theorem~\ref{thm:rexBUT}, the result follows.
\end{proof}


From Theorem~\ref{thm:re2reBUT}, we obtain the following results relative to strings and worldsheets.

\begin{corollary}\label{cor:stringSpace}
Let $X$ be a string space.  Assume $(X, \tau_X, {\sn}_X)$, $(\mathbb{R}^k, \tau_{\mathbb{R}^k}, {\sn}_{\mathbb{R}^k})$, $k > 0$ are topological string spaces endowed with compatible strong proximities.
If $f:2^X\longrightarrow \mathbb{R}^k$ is $\sn$-Re.s.p continuous, then $f(\str A) = f(\righthalfcap \str A)$ for some set of strings $\str A\in 2^X$.
\end{corollary}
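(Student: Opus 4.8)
The plan is to derive this corollary as a direct specialization of Theorem~\ref{thm:re2reBUT} (re2reBUT), exactly in the manner of the preceding ``swap out'' arguments such as the proof of Theorem~\ref{thm:reRnBUTsheet}. The key observation is that when $X$ is a string space, every member of $X$ is a string, so any set of strings $\str A$ is nothing more than a region (a subset) in the family $2^X$. Thus the notion of ``set of strings'' coincides with the notion of ``region'' already handled by the general theorem, and no fresh analytic content is required.

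First I would record that, by the definition of a string space, the ambient set $X$ carries the dimension of the space in which its strings reside (an $n$-sphere surface $S^n$ or an $n$-dimensional normed linear space), so $X$ is $n$-dimensional in the sense demanded by Theorem~\ref{thm:re2reBUT}. Together with the standing assumption that $(X,\tau_X,{\sn}_X)$ and $(\mathbb{R}^k,\tau_{\mathbb{R}^k},{\sn}_{\mathbb{R}^k})$, $k>0$, are equipped with compatible strong proximities, this verifies every hypothesis of re2reBUT for the map $f:2^X\longrightarrow \mathbb{R}^k$.

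Next I would apply Theorem~\ref{thm:re2reBUT} to the $\sn$-Re.s.p continuous map $f$, obtaining a region $A\in 2^X$ with $f(A) = f(\righthalfcap A)$ for its antipode $\righthalfcap A$. Since in a string space each such region $A$ is itself a set of strings, I would simply rename $A$ as $\str A$ and $\righthalfcap A$ as $\righthalfcap \str A$, yielding $f(\str A) = f(\righthalfcap \str A)$ for some set of strings $\str A\in 2^X$, which is the assertion.

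The only point requiring care --- the ``obstacle,'' such as it is --- is the bookkeeping that guarantees a set of strings genuinely qualifies as an admissible region of $2^X$ and that its antipode $\righthalfcap \str A$ is again a set of strings rather than some arbitrary region. This is immediate here, because every element of a string space is a string, so every subset of $X$ is automatically a set of strings; hence the antipodal region produced by re2reBUT is itself a set of strings, and the corollary follows with no additional argument.
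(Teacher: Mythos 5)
Your proposal is correct and follows exactly the paper's route: the paper states this corollary as an immediate consequence of Theorem~\ref{thm:re2reBUT} (``From Theorem~\ref{thm:re2reBUT}, we obtain the following results relative to strings and worldsheets''), with the implicit argument being precisely your observation that in a string space every region in $2^X$ is a set of strings, so the antipodal regions produced by re2reBUT may simply be relabeled $\str A$ and $\righthalfcap \str A$. Your added bookkeeping about the antipode also being a set of strings is the same specialization the paper takes for granted.
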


\begin{corollary}
Let $X$ be a worldsheet space.  Assume $(X, \tau_X, {\sn}_X)$, $(\mathbb{R}^k, \tau_{\mathbb{R}^k}, {\sn}_{\mathbb{R}^k})$, $k > 0$ are topological worldsheet spaces endowed with compatible strong proximities.
If $f:2^X\longrightarrow \mathbb{R}^k$ is $\sn$-Re.s.p continuous, then $f(\sh A) = f(\righthalfcap \sh A)$ for some set of worldsheets $\sh A\in 2^X$.
\end{corollary}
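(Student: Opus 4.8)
The plan is to obtain this corollary as a direct specialization of the Region-2-Region Based Borsuk-Ulam Theorem (Theorem~\ref{thm:re2reBUT}), mirroring the ``swap out'' arguments already used for Theorem~\ref{thm:reRnBUTsheet} and Corollary~\ref{cor:stringSpace}. The essential observation is that when $X$ is a worldsheet space, each worldsheet $\sh A$ is by definition a nonempty region, hence a genuine member of the family $2^X$. First I would record that the hypotheses of Theorem~\ref{thm:re2reBUT} are met: $(X,\tau_X,{\sn}_X)$ serves as the required $n$-dimensional space, $(\mathbb{R}^k,\tau_{\mathbb{R}^k},{\sn}_{\mathbb{R}^k})$ with $k>0$ is the feature space, the two carry compatible strong proximities, and $f:2^X\longrightarrow \mathbb{R}^k$ is ${\sn}$-Re.s.p continuous.

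Second, I would identify the antipodal structure. A pair of worldsheets $\sh A,\righthalfcap \sh A$ is antipodal exactly when they contain disjoint strings $\str A\in \sh A$ and $\str B\in \righthalfcap \sh A$ with $\str A\cap \str B=\emptyset$; this realizes $\sh A$ and $\righthalfcap \sh A$ as antipodal regions in the sense required by Theorem~\ref{thm:re2reBUT}, since in particular $\righthalfcap \sh A\in 2^X\setminus \sh A$. Third, with worldsheets recognized as regions, I would substitute $A=\sh A$ and $\righthalfcap A=\righthalfcap \sh A$ into the conclusion of Theorem~\ref{thm:re2reBUT}. Because $f(\sh A)\in \mathbb{R}^k$ is a feature vector describing $\sh A$, and the theorem guarantees the existence of a region whose description is shared by its antipode, the equality $f(\sh A)=f(\righthalfcap \sh A)$ follows for some set of worldsheets $\sh A\in 2^X$.

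The step I expect to carry the most weight is the second one: one must verify that the comparatively weak antipodal condition for worldsheets (sharing merely a single pair of disjoint strings) still places $\sh A$ and $\righthalfcap \sh A$ within the scope of the ``antipodal regions'' hypothesis that drives Theorem~\ref{thm:re2reBUT} through Lemma~\ref{lemma:rexBorsuk}. Since that underlying region-based machinery only requires $\righthalfcap A\in 2^X\setminus A$ together with a common $k$-dimensional description, and imposes no parallel-hyperplane condition, this inclusion is immediate; the remaining steps are then purely a matter of relabeling generic regions as worldsheets, so no genuine obstacle arises.
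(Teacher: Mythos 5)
Your proposal is correct and matches the paper's own (implicit) argument: the paper derives this corollary directly from Theorem~\ref{thm:re2reBUT} by observing that worldsheets in a worldsheet space are themselves regions in $2^X$, exactly the specialization you carry out. Your additional verification that the weak worldsheet antipodality condition (a single pair of disjoint strings, so that $\righthalfcap \sh A\in 2^X\setminus \sh A$) suffices for the region-based hypotheses is a reasonable fleshing-out of what the paper leaves unstated, not a departure from its route.
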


\begin{example}
Let the Euclidean spaces $S^2$ and $\mathbb{R}^3$ be endowed with the strong proximity $\sn$ and let $\str A,\righthalfcap \str A$ be antipodal strings in $S^2$.   Further, let $f$ be a proximally continuous mapping on $2^{S^2}$ into $2^{\mathbb{R}^4}$ defined by
\begin{align*}
\str A &\in 2^{S^2},\\
\Phi &: 2^{S^2}\longrightarrow \mathbb{R}^3,\ \mbox{defined by}\\
\Phi(\str A) &= \left(\mbox{bounded,finite,length}\right)\in \mathbb{R}^3,\ \mbox{and}\\
f:2^{S^2} &\longrightarrow 2^{\mathbb{R}^4},\ \mbox{defined by}\\ 
f(2^{S^2}) &= \left\{\Phi(\str A)\in \mathbb{R}^3: \str A\in 2^{S^2}\right\}\in 2^{\mathbb{R}^4}.\mbox{\qquad \textcolor{blue}{$\blacksquare$}}
\end{align*}
\end{example}


\subsection{Brouwer's Fixed Point Theorem and Wired Friend Theorem}
Next, consider Brouwer's fixed point theorem~\cite{
Brouwer1906MathAnnFixedPoint,Brouwer1911MathAnnNo1FixedPoint,Brouwer1912MathAnnNo4FixedPoint,Brouwer1921MathAnnVol82FixedPoint}.
Let $B^n$ denote the unit $n$-ball in $\mathbb{R}^n$, which is the interior of a sphere $S^n$.  Let ${\bf x}$ be a point in $\mathbb{R}^n$ and let $\norm{{\bf x}} = \norm{(x_1,\dots,x_n)}$.  Then a \emph{closed unit ball} $B^n$\cite[\S 24]{Munkres2000} is defined by
\[
B_n = \left\{{\bf x}: \norm{{\bf x}} \leq 1\right\}.
\]
A \emph{fixed point} for a map $f:B_n\longrightarrow B_n$ is a point ${\bf x}$ so that $f({\bf x}) = {\bf x}$.

\begin{theorem}\label{thm:Brouwer}{Brouwer's Fixed Point Theorem for $n = 1,2$}{\rm \cite{Brouwer1906MathAnnFixedPoint}}
Let $n = 1,2$ and let $B_n$ be a closed unit ball in $\mathbb{R}^n$.   Then every continuous map $f:B_n\longrightarrow B_n$ 
has a fixed point.
\end{theorem}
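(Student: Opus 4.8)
The plan is to treat the two dimensions separately, since the obstruction in each case is of a genuinely different character.

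For $n = 1$, I would reduce everything to the Intermediate Value Theorem. Here $B_1 = [-1,1]$, and for a continuous $f : B_1 \to B_1$ I would introduce the auxiliary function $g(x) = f(x) - x$, which is continuous on $[-1,1]$. At the two endpoints the constraint $f(x) \in [-1,1]$ forces $g(-1) = f(-1) + 1 \geq 0$ and $g(1) = f(1) - 1 \leq 0$. If either inequality is an equality we already have a fixed point at an endpoint; otherwise $g$ changes sign on $[-1,1]$, so the Intermediate Value Theorem yields a point $x_0$ with $g(x_0) = 0$, that is $f(x_0) = x_0$. This case uses no topology beyond connectedness of the interval.

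For $n = 2$ the plan is to argue by contradiction and reduce to the \emph{no-retraction theorem}: there is no continuous map $r : B_2 \to S^1$ that restricts to the identity on the boundary circle $S^1 = \partial B_2$. Assuming $f : B_2 \to B_2$ has no fixed point, so that $f(x) \neq x$ for every $x$, I would build such a retraction geometrically: for each $x \in B_2$ draw the ray emanating from $f(x)$ and passing through $x$, and let $r(x)$ be the unique point at which this ray meets $S^1$. The hypothesis $f(x) \neq x$ guarantees the ray is well defined, and a short argument shows that $r$ depends continuously on $x$ (continuity of $f$ together with the nonvanishing of $x - f(x)$). For $x$ already on the boundary the ray exits at $x$ itself, so $r$ restricts to the identity on $S^1$, exhibiting $r$ as a retraction.

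The main obstacle — and the step that genuinely uses dimension two — is showing that no such retraction $r$ exists. Here I would invoke the fundamental-group functor. Writing $\iota : S^1 \hookrightarrow B_2$ for the inclusion, a retraction satisfies $r \circ \iota = \mathrm{id}_{S^1}$, hence on fundamental groups $r_* \circ \iota_* = \mathrm{id}$ on $\pi_1(S^1) \cong \mathbb{Z}$. But this identity homomorphism is forced to factor through $\pi_1(B_2)$, which is trivial because $B_2$ is convex and hence contractible; the identity on $\mathbb{Z}$ cannot factor through the zero group, a contradiction. This disposes of the case $n = 2$ and completes the proof. I note that the same contradiction can be reached within the framework developed above: the no-retraction theorem is a standard consequence of the Borsuk-Ulam Theorem (Theorem~\ref{thm:Borsuk-Ulam1933one}), so the result may alternatively be folded into the strong-proximity machinery of the preceding sections rather than appealing to $\pi_1$ directly.
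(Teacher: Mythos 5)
Your proof is correct, but it takes a genuinely different route from the paper, which in fact gives no argument at all: the paper's ``proof'' simply defers to the literature, citing \cite[\S 5, Theorem 5.1.22]{Runde2005topology} for the cases $n=1,2$ and \cite{Hadamard1910BrowerFixedPointTheorem} for the general-dimension statement. You instead supply a self-contained proof by the standard two-step argument: the Intermediate Value Theorem applied to $g(x)=f(x)-x$ on $B_1=[-1,1]$, and, for $n=2$, the reduction of a fixed-point-free map to a retraction $r:B_2\to S^1$ followed by the no-retraction theorem via $\pi_1$ (the identity on $\pi_1(S^1)\cong\mathbb{Z}$ cannot factor through the trivial group $\pi_1(B_2)$). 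Both steps are sound; the only point worth tightening is the phrase ``the unique point at which this ray meets $S^1$'': if $f(x)$ itself lies on $S^1$ the ray can meet the circle twice, so one should parametrize the ray as $x+t\bigl(x-f(x)\bigr)$, $t\geq 0$, and take the unique intersection with $t\geq 0$, which also makes $r(x)=x$ on $\partial B_2$ immediate. What the two approaches buy is different: the paper treats Brouwer's theorem as classical background and keeps its own machinery purely proximity-theoretic, whereas your argument is complete in itself but imports the fundamental-group functor, a tool the paper never develops and that sits outside its strong-proximity framework. Your closing observation that the result could instead be derived from the Borsuk-Ulam Theorem (Theorem~\ref{thm:Borsuk-Ulam1933one}) is apt and actually closer in spirit to the paper, which cites exactly such a derivation, F.~E.~Su's construction \cite{Su1997AMMprovesBUT}, in the examples immediately following the theorem.
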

\begin{proof}
The proof for $n = 1,2$ and closed unit ball $B_n$ in $\mathbb{R}^n$ is given in~\cite[\S 5, Theorem 5.1.22]{Runde2005topology}.
J. Hadamard gave a proof of this theorem for all dimensions~\cite{Hadamard1910BrowerFixedPointTheorem}.
\end{proof}

\noindent A complete study of Brouwer's Fixed Point Theorem is given by T. Stuckless~\cite{Stuckless1999MethodsOfProofForBrouwer}.

\begin{example}{\bf Coffee Cup Illustration of Brouwer's Fixed Point Theorem for Dimension 3}.\\
F.E. Su~\cite{Su1997AMMprovesBUT} gives a coffee cup illustration of the Brouwer Fixed Point Theorem (our Theorem~\ref{thm:Brouwer}) for all dimensions.  No matter how you continuously slosh the coffee around in a coffee cup, some point is always in the same position that it was before the sloshing began.   And if you move this point out of its original position, you will eventually move some \emph{other} point in the sloshing coffee back into into its original position. \qquad \textcolor{blue}{$\blacksquare$}
\end{example}

\begin{example}{\bf Proof of Brouwer's Fixed Point Theorem for Dimension $n$}.\\
F.E. Su~\cite{Su1997AMMprovesBUT} gives a constructive proof of the Brouwer Fixed Point Theorem (our Theorem~\ref{thm:Brouwer}) for all dimensions. V. Runde also gives a beautiful proof of Brouwer's Theorem for $n$ dimensions. \qquad \textcolor{blue}{$\blacksquare$}
\end{example}

We can always find a ball containing $g(\str A)\in \mathbb{R}^n$, which is a description of a string $\str A$ with $n$ features on a hypersphere $S^n$.  Also, observe that each $\str A$ has a particular shape (denoted by $\strShape A$).    

\begin{remark}{\bf String Theory and Wired Friends}.\\
The shape of a string $\strShape A$ is the silhouette of a string $\str A$.  
A $\str A$ is called a wired friend of an object $A$. Every wired friend is known by its shape.\qquad \textcolor{blue}{$\blacksquare$}
\end{remark}

These observations lead to a wired friend theorem.  The proof of Theorem~\ref{thm:wiredFriends} uses a projection mapping.
Let $O$ be an object space, $X$ a set of shapes in an $n$-dimensional space derived from $O$ and $Y$ a $k$-dimensional feature $\mathbb{R}^n$ space with $k \geq 1$.  Each component of a feature vector in $Y$ is a feature value of a shape in $X$.  Let $\longmapsto$ read \emph{maps to}.  Then a projection mapping is defined by
\[
O\ \longmapsto\ X\ \longmapsto\ Y.
\]

\begin{example}{\bf Sample Projection Mapping a Set of 2-dimensional (flat) Worldsheets}.\\
$\left\{\includegraphics[width=25mm]{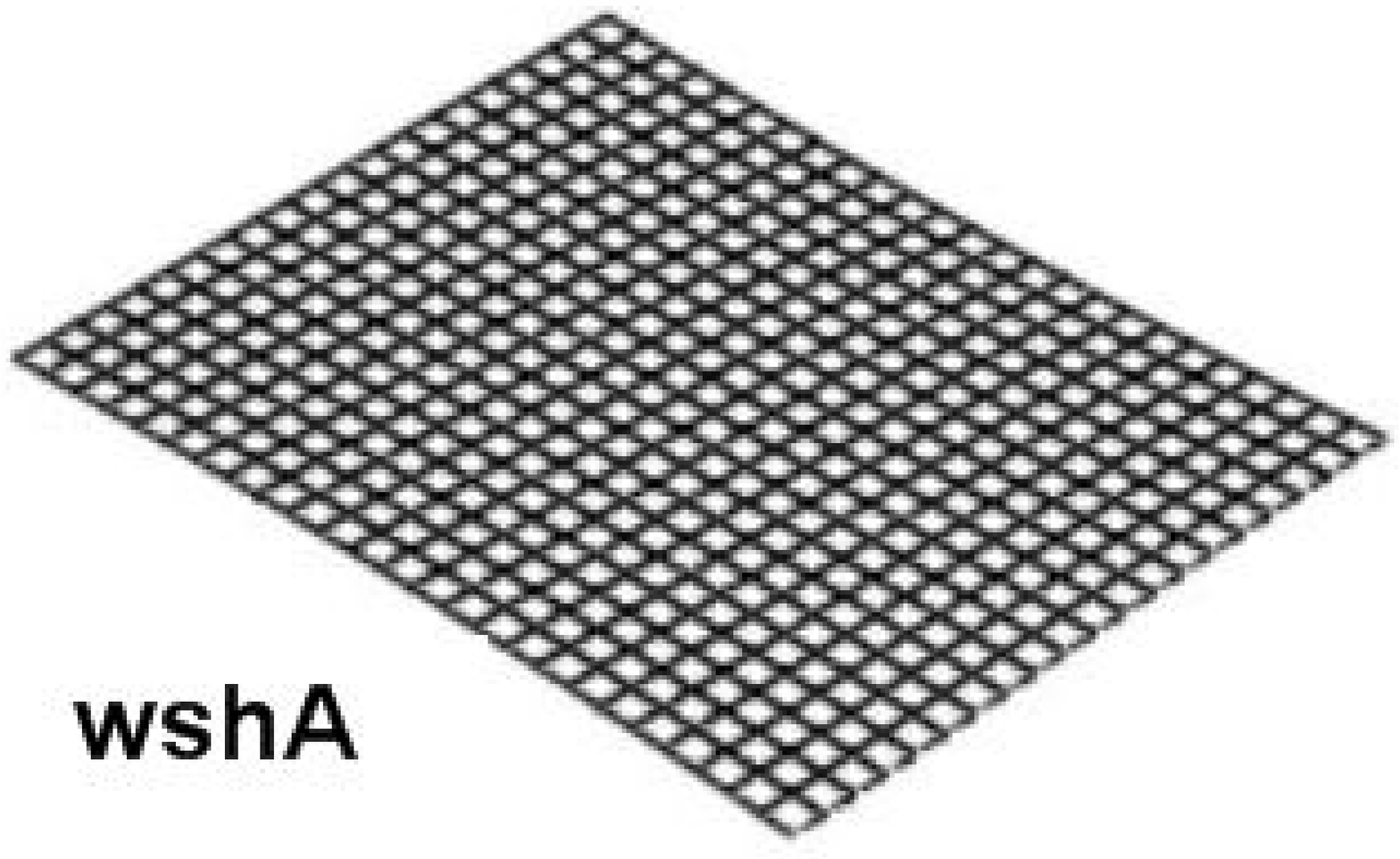}\right\}$\qquad{\large $\boldsymbol{\longmapsto}$}\qquad$\left\{\includegraphics[width=20mm]{2worldsheet}\right\}$\qquad{\large $\boldsymbol{\longmapsto}$}\qquad $\boldsymbol{\mathbb{R}^k}$.\\
Let $\wsh A\in O$ be a flat 2D worldsheet is a set of world sheets $W$.   Let $\cyl A\in X$ be a cylinder (rolled up worldsheet). And let $X$ be a set of $3$-dimensional worldsheet cylinders (flat worldsheets rolled up into a cylinder).  And let $Y$ be a $k$-dimensional feature space $\mathbb{R}^k$.  Each component of a feature vector in $\mathbb{R}^k$ is a feature value of a cylinder $\cyl A\in X$.  Then a projection mapping is defined by
\[
\left\{\wsh A\right\}\ \mathop{\longmapsto}\limits^f\ \left\{\cyl A\right\}\ \mathop{\longmapsto}\limits^g\ \mathbb{R}^k
\]  
Define $f(\wsh A) = \cyl A\in X$.  Select $k$ ($k \geq 1$) features of a cylinder $\cyl A$.  Then define 
\[
g(f(\wsh A)) = g(\cyl A) = k-\mbox{dimensional feature vector in}\ \mathbb{R}^k.
\]
\qquad \textcolor{blue}{$\blacksquare$}
\end{example}

\begin{theorem}\label{thm:wiredFriends}{\bf Wired Friend Theorem}\\
Every occurrence of a wired friend $\str A$ with a particular $\strShape$ with $k$ features on $S^n$ maps to a fixed description $g(\str A)$ that belongs to a ball $B_k$ in $\mathbb{R}^k$.
\end{theorem}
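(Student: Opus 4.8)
The plan is to realize the wired friend $\str A$ inside the projection chain $O\longmapsto X\longmapsto Y$ and then to read off its description as a single point of $\mathbb{R}^k$. First I would fix the wired friend $\str A$ on $S^n$ together with its silhouette $\strShape A$ and record the hypothesis that $\strShape A$ carries exactly $k$ feature values. Following the sample projection mapping, write $\str A\overset{f}{\longmapsto}\cyl A\overset{g}{\longmapsto}\mathbb{R}^k$, so that the composite $g\circ f$ sends the wired friend to the feature vector $g(\str A)=(y_1,\dots,y_k)$, whose $i$th coordinate is the $i$th feature value of $\strShape A$. Since there are exactly $k$ real coordinates, $g(\str A)\in\mathbb{R}^k$.

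Next I would dispose of the ball. Because $g(\str A)$ is a single point of $\mathbb{R}^k$, it has finite norm $\norm{g(\str A)}$; rescaling the feature values (dividing through by any $r\geq\norm{g(\str A)}$) places the description inside the closed unit ball $B_k=\left\{{\bf x}\in\mathbb{R}^k:\norm{{\bf x}}\leq 1\right\}$. This is exactly the observation, recorded just before the statement, that one can always find a ball containing $g(\str A)$. Hence $g(\str A)\in B_k$.

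It remains to justify the word \emph{fixed}. I would argue first that the description is occurrence-independent: since every wired friend is known by its shape, any two occurrences of $\str A$ presenting the same $\strShape A$ hand the same $k$ feature values to $g$, and single-valuedness of $g\circ f$ forces them to share the value $g(\str A)$; thus the description is invariant across occurrences. To upgrade this to a genuine fixed point in the Brouwer sense, I would view the normalised feature map as a continuous self-map $g:B_k\longrightarrow B_k$ and invoke Theorem~\ref{thm:Brouwer}: $g$ then has a point ${\bf x}^{*}$ with $g({\bf x}^{*})={\bf x}^{*}$, and identifying this fixed point with the description of the wired friend exhibits $g(\str A)\in B_k$ as a fixed description.

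The main obstacle is pinning down ``fixed.'' The delicate step is justifying that the $k$-tuple of feature values is determined by $\strShape A$ alone, independent of where the string lies on $S^n$, so that the feature map is well defined on shape classes; once this is granted the containment in $B_k$ is immediate. The only technical point requiring care is the continuity (equivalently, strong proximal continuity, in the sense used for Theorem~\ref{thm:rexBUT}) of the normalised self-map $g:B_k\longrightarrow B_k$, which is what licenses the appeal to Theorem~\ref{thm:Brouwer}.
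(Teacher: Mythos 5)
Your first two steps (the projection chain $\str A\overset{f}{\longmapsto}\strShape A\overset{g}{\longmapsto}g(\strShape A)\in\mathbb{R}^k$ and the observation that one can always find a ball $B_k$ containing the description) do track the paper's argument. But your handling of the word \emph{fixed} contains a genuine error. In this theorem ``fixed description'' does not mean a fixed point of a self-map in the Brouwer sense; it means that the description is \emph{invariant across occurrences} of the wired friend: every occurrence of $\str A$, wherever it sits on $S^n$, is sent by $g\circ f$ to the one and the same vector $g(\str A)$. The paper obtains exactly this invariance from the string-based Borsuk--Ulam results: by Theorem~\ref{cor:strBUTsimplest} (and, for an arbitrary ambient space, Theorem~\ref{thm:re2reBUT}), $g(\str A)=g(\righthalfcap\str A)$ for every string $\str A$ on $S^n$, so antipodal (hence spatially distinct) occurrences with the same shape share a single description. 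Your proposal never invokes strBUT at all, which is the paper's key ingredient and the reason this theorem belongs to the paper.

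The Brouwer step you add to ``upgrade'' fixedness is a non sequitur on two counts. First, the map $g$ is not a self-map of $B_k$: it takes shapes (or strings) to descriptions, and your ``normalised feature map $g:B_k\longrightarrow B_k$'' is simply undefined on points of $B_k$ that are not descriptions of any shape. Second, even granting such a continuous self-map, Theorem~\ref{thm:Brouwer} only yields existence of \emph{some} point ${\bf x}^*$ with $g({\bf x}^*)={\bf x}^*$; nothing identifies ${\bf x}^*$ with $g(\str A)$, so ``identifying this fixed point with the description of the wired friend'' is an unjustified leap. Brouwer's theorem appears in the paper only as contextual material preceding the theorem; the actual proof does not use it. If you delete the Brouwer paragraph and instead derive occurrence-invariance from Theorem~\ref{cor:strBUTsimplest}/Theorem~\ref{thm:re2reBUT} as above (your own first, shape-based argument for occurrence-independence is then a consistency remark, not the engine), your proof becomes essentially the paper's.
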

\begin{proof}
From Theorem~\ref{cor:strBUTsimplest}, we know $g(\str A) = g(\righthalfcap \str A)$ in $\mathbb{R}^k$ for every $\str A$ on $S^n$ or, for that matter (from Theorem~\ref{thm:re2reBUT}), in any
space $X$ containing $\str A$.  Every friend $A\in 2^X$ can be represented by a set of strings.  Observe that $g(\str A)$ is a fixed description of a $\strShape A$ of a wired friend $\str A$. In other
words, we have a projection mapping:
\[
\str A\ \mathop{\longmapsto}\limits^f\ \strShape A\ \mathop{\longmapsto}\limits^g\ \mbox{description}\ g(\strShape A), 
\]
{\em i.e.}, a wired friend $\str A$ maps to string shape $\strShape A$ such that $f(\str A) = \strShape A$.  And $\strShape A$ maps to a description of the string shape, namely $g(\strShape A)\in \mathbb{R}^k$. Since every $g(f(A)) = g(\strShape A)$ belongs to a $k$-dimensional ball $B_k$ (by definition), we have the desired result.
\end{proof}

\section{Application}
In this section, a sample application of strBUT is given in terms of Electroencephalography (EEG), which is an electrophysiological monitoring method used to record electrical activity in the brain.  The strBUT variant of the Borsuk-Ulam Theorem uses particles with closed trajectories instead of points.  The usual continuous function required by BUT is replaced by a proximally continuous function from region-based forms of BUT (reBUT)~\cite{PetersTozzi2016reBUT}~\cite[\S 5.7]{Peters2016ISRLcomputationalProximity}, providing the foundation for applications of strBUT.  This guarantees that whenever a pair of strings (spatial regions that are worldlines) are strongly close (near enough for the strings to have common elements), then we know that their mappings are also strongly close.  In effect, strongly close strings are strings with junctions that map to strongly close sets of vectors in $\mathbb{R}^n$.  

By way of application of the proposed framework, the closed paths described by strBUT represent biochemical pathways occurring in eukaryotic cells.   Indeed, in the cells of animals, the molecular components and signal pathways are densely connected with the rest of an animal's systems.   The tight coupling among different activities such as transcription, domain recombination and cell differentiation, gives rise to a signaling system that is in charge of receiving and interpreting environmental inputs.   Transmembrane molecular mechanisms continuously sense the external milieu, leading to amplification cascades and mobilization of many different actuators.  An intertwined, every changing interaction occurs between incoming signals and inner controlling mechanisms.  The string paths are closed and display a hole in their structure.  

In effect, real metabolic cellular patterns can be described in abstract terms as trajectories traveling on a donutlike structure.  Indeed, every biomolecular pathway in a cell is a closed string, intertwined with strong proximities~\cite[\S 1.4, 1.9]{Peters2016ISRLcomputationalProximity} with other strings to preserve the general homeostasis.

\setlength{\intextsep}{0pt}
\begin{wrapfigure}[11]{R}{0.35\textwidth}
\begin{minipage}{3.2 cm}
\centering
\includegraphics[width=37mm]{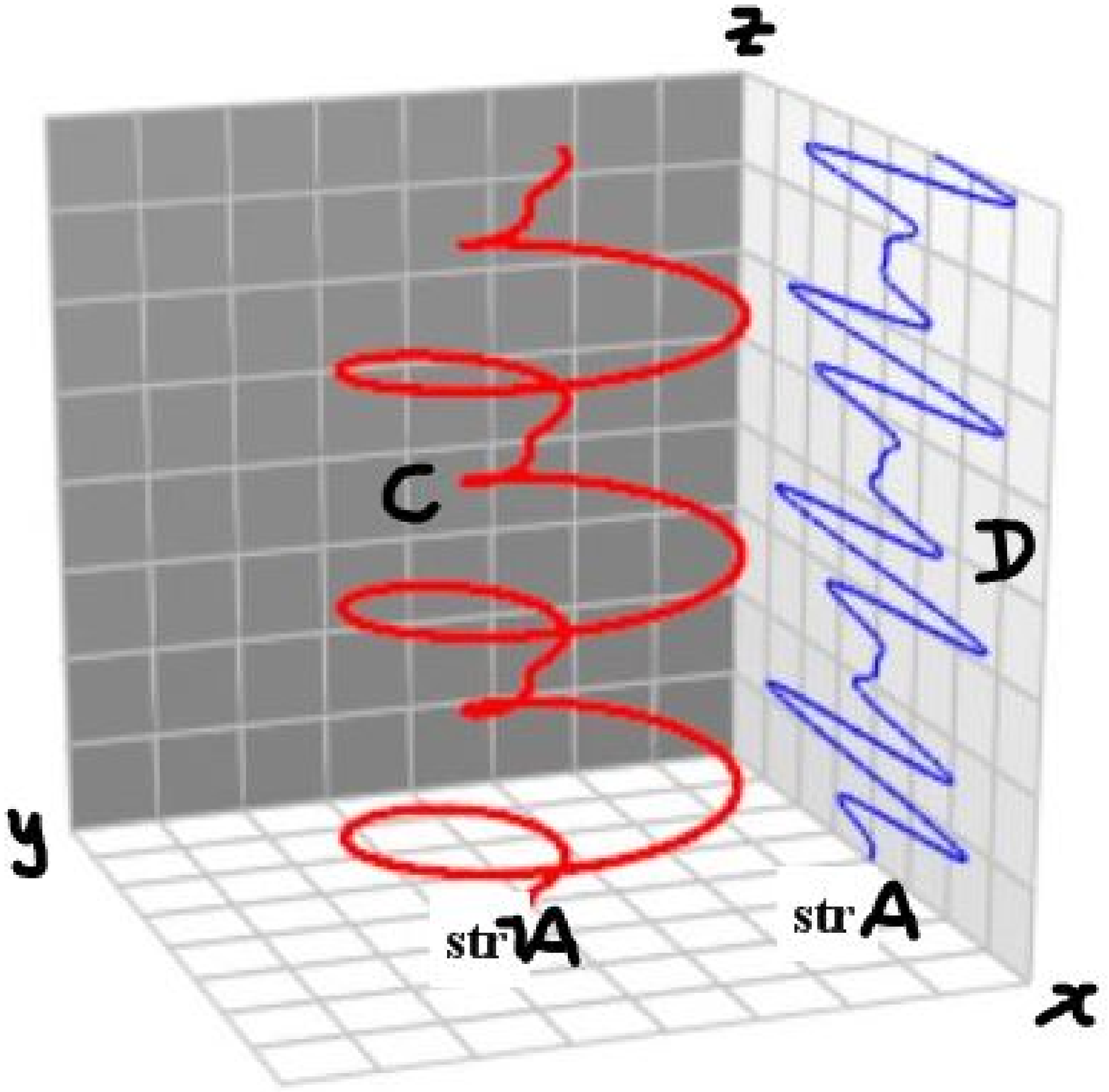}
\caption[]{\footnotesize $\boldsymbol{2^{S^2}\mapsto \mathbb{R}^3}$}
\label{fig:R2toR3}
\end{minipage}
\end{wrapfigure}
$\mbox{}$\\
\vspace{3mm}

This leads to an EEG scenario that fits into the framework described by Corollary~\ref{cor:stringSpace}. Let the path $D$ in Fig.~\ref{fig:R2toR3} represent a planar EEG trace and let $\str A\in 2^X$ be the string defined by the EEG trace, where $X = \mathbb{R}^2$ is a topological string space equipped with a strong proximity ${\sn}_X$.  In addition, let $R^3$ be a topological string space equipped with the the strong proximity ${\sn}_{\mathbb{R}^3}$. Further, $f:2^X\longrightarrow \mathbb{R}^3$ is a $\sn$-Re.s.p continuous mapping.  From Corollary~\ref{cor:stringSpace}, we can find the antipodal string $\righthalfcap \str A$ such that  $f(\str A) = f(\righthalfcap \str A)$.

\begin{example}\label{ex:twistingEEGsignal}
The proximally continuous mapping from a planar EEG string $\str A$ in $\mathbb{R}^2$ to an EEG string $f(\str A)$ in $\mathbb{R}^3$ is represented in Fig.~\ref{fig:R2toR3}.  In this example, $\str A$ is defined by an EEG signal following along the path $D$ in the $xz$-plane.  And $\str A \mapsto f(\str A)$ in 3-space, {\em i.e.}, there is a proximally continuous mapping from $\str A$ in 2-space to $f(\str A)$ in 3-space.  The added dimension is a result of considering the location of each point $(x,z)$ along the path $D$ as well as the twisting feature (call it $twist$) defined by 1.2*(1-cos(2.5*t))*cos(5*t) at time $t$.  That is, each point $(x,z)$ in $\str A$ corresponds to a feature vector $(x,z,twist)$ in $f(\str A)$ in 3-space.  A sample twist of the EEG signal is defined by twist(x,z) = 1.2(1-zcos(2.5x))cos(5x).
\qquad \textcolor{blue}{$\blacksquare$} 
\end{example}

\begin{figure}[!ht]
\centering
\includegraphics[width=55mm]{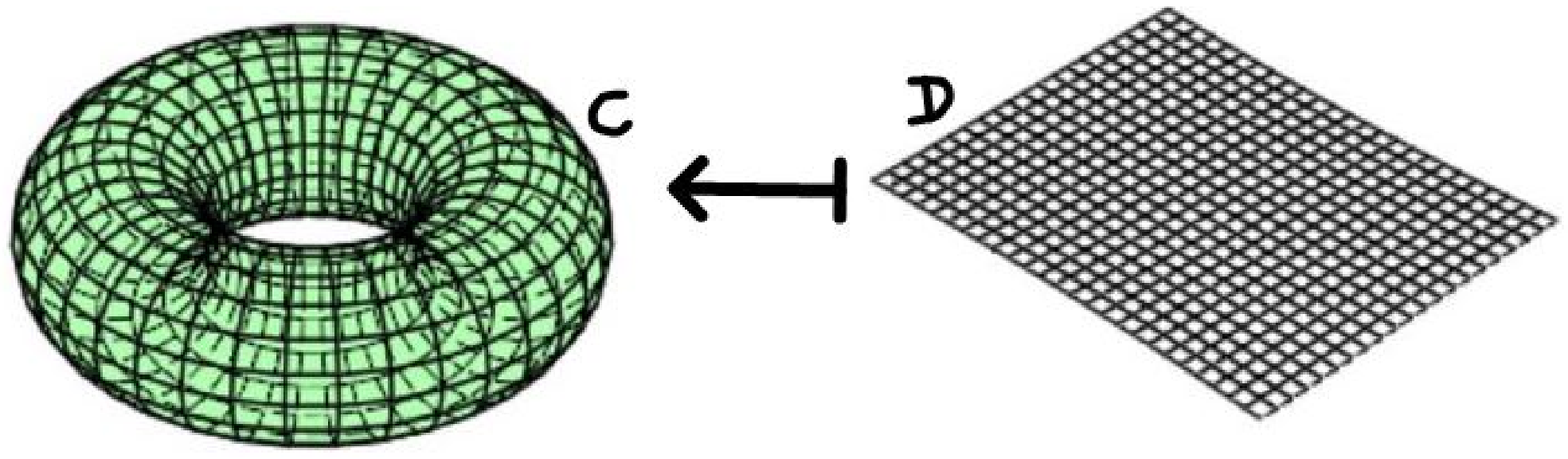}
\caption[]{$\boldsymbol{\mbox{\bf worldsheet}\ \sh D\mapsto\ \mbox{\bf torus}C}$}
\label{fig:R2toTorus}
\end{figure}
$\mbox{}$\\
\vspace{2mm} 

Combining the observations in Example~\ref{ex:rolledUpWorldsheet}(rolled up worldsheet to obtain a worldsheet cylinder) and Example~\ref{ex:bendedUpWorldsheetTorus} (bending a worldsheet cylinder to obtain a worldsheet torus), we obtain a new shape (namely, a torus) to represent the twists and turns of the movements of EEG signals along the surface of a torus (see Fig.~\ref{fig:R2toTorus}).

\begin{example}
\includegraphics[width=15mm]{0worldsheet}\qquad{\large $\boldsymbol{\mapsto}$}\qquad\includegraphics[width=10mm]{2worldsheet}
\quad{\large $\boldsymbol{\mapsto}$}\quad\includegraphics[width=10mm]{torus}\\
The proximally continuous mapping from a planar EEG worldsheet $\sh M$ in $\mathbb{R}^2$ to an EEG ring torus $f(\sh M)$ in $\mathbb{R}^3$ is represented in Fig.~\ref{fig:R2toTorus}.  A \emph{ring torus} is tubular surface in the shape of a donut, obtained by rotating a circle of radius $r$ (called the \emph{tube radius}) about an axis in the plane of the circle at distance $c$ from the torus center.  From example~\ref{ex:twistingEEGsignal}, each $\str A\in\sh M$ is defined by an EEG signal following along the path $D$ in the $xz$-plane.   And $\sh M$ maps to the tubular surface of a ring torus in 3-space, {\em i.e.}, there is a proximally continuous mapping from $\sh M$ in 2-space to ring torus surface $f(\sh M)$ in 3-space.   Then the surface area $S$ and volume $V$ of a ring torus~\cite[\S 8.7]{GellertEtAl1975torus} are given 
\begin{align*}
c &> r,\\
S &= 2\pi c\cdot 2\pi r = 4\pi^2 cr,\\
V &= 2\pi c\cdot \pi r^2 = 2\pi^2 cr^2.
\end{align*}
Carrying this a step further, the coordinates $x,y,z$ of the points in worldsheet strings on a ring torus can be expressed parametrically using the parametric equations in~\cite{Weisstein2016torus} as follows. 
\begin{align*}
u,v &\in [0,2\pi],c > r,\\
x &= (c + r\mbox{cos}\ v)\mbox{cos}\ u,\\
y &= (c + r\mbox{cos}\ v)\mbox{sin}\ u,\\
\mbox{twist}(r,v) &= r\mbox{sin}\ v. \mbox{\qquad \textcolor{blue}{$\blacksquare$} }
\end{align*}
\end{example}

\bibliographystyle{amsplain}
\bibliography{NSrefs}

\end{document}